\definecolor{ghcolor}{RGB}{0, 150, 200} %defines colors if you need in hyperref
\definecolor{winestain}{rgb}{0.5,0,0}
\newtheorem{thm}[subsubsection]{Theorem}
\newtheorem{lemma}[subsubsection]{Lemma}
\newtheorem{prop}[subsubsection]{Proposition}
\theoremstyle{definition}
\newtheorem{defn}[subsubsection]{Definition}
\theoremstyle{remark}
\newtheorem{remark}[subsubsection]{Remark}
\def\numequation{\addtocounter{subsubsection}{1}\begin{equation}}
\def\nummultline{\addtocounter{subsubsection}{1}\begin{multline}}
\newcommand{\loccit}{\textit{loc.~cit. }}
\newcommand{\hatm}{\hat{\mathfrak{M}}}
\newcommand{\hatM}{\hat{\mathfrak{M}}}
\newcommand{\barhatm}{\overline{\hat{\mathfrak{M}}}}
\newcommand{\barhatn}{\overline{\hat{\mathfrak{N}}}}
\newcommand{\barm}{\overline{\mathfrak{M}}}
\newcommand{\barn}{\overline{\mathfrak{N}}}
\newcommand{\llb}{\llbracket}  %for [\![u]\!].... so \llb u \rrb, etc...
\newcommand{\rrb}{\rrbracket}
\renewcommand{\OE}{\mathcal O_E}  %ring of integer in a field E
\newcommand{\barchi}{\overline{\chi}}
\newcommand{\barrho}{\overline{\rho}}
\newcommand{\Zp}{\mathbb{Z}_p}
\newcommand{\Qp}{\mathbb{Q}_p}
\newcommand{\Fp}{\mathbb{F}_p}
\newcommand{\Qpbar}{\overline{\mathbb{Q}}_p}
\newcommand{\Fpbar}{\overline{\mathbb{F}}_p}
\newcommand{\barK}{\overline{K}}
\newcommand{\Z}{\mathbb{Z}}
\newcommand{\diag}{\textnormal{diag}}
\newcommand{\col}{\textnormal{col}}
\DeclareMathOperator{\Fil}{Fil}
\DeclareMathOperator{\Gal}{Gal}
\DeclareMathOperator{\GL}{GL}
\DeclareMathOperator{\Hom}{Hom}
\DeclareMathOperator{\Ker}{Ker}
\DeclareMathOperator{\Mat}{Mat}
\DeclareMathOperator{\Mod}{Mod}
\newcommand{\HT}{\mathrm{HT}}
\newcommand{\Acris}{A_{\textnormal{cris}}}
\newcommand{\D}{\mathcal{D}}
\newcommand{\huaS}{\mathfrak{S}}
\newcommand{\huaM}{\mathfrak{M}}
\newcommand{\huam}{\mathfrak{M}}
\newcommand{\Ghat}{\hat{G}}
\newcommand{\Rhat}{\hat{\mathcal{R}}}
\newcommand{\mhat}{\hat{\huaM}}
\newcommand{\That}{\hat{T}}
\newcommand{\tn}{t^{\{n\}}}
\newcommand{\bolde}{\boldsymbol{e}}
\title{A note on crystalline liftings in the $\Qp$ case}
\author{HUI GAO}
\address{Department of Mathematics and Statistics, FIN-00014 University of Helsinki, Finland}
\email{hui.gao@helsinki.fi}
\subjclass[2010]{Primary 11F80, 11F33}
\keywords{Kisin modules, crystalline representations; modules de Kisin, repr\'esentations cristallines}
\begin{document}

\begin{abstract}
Let $p>2$ be a prime. Let $\rho$ be a crystalline representation of $G_{\Qp}$ with distinct Hodge-Tate weights in $[0, p]$, such that its reduction $\barrho$ is upper triangular. Under certain conditions, we prove that $\barrho$ has an upper triangular crystalline lift $\rho'$ such that $\HT(\rho')=\HT(\rho)$. The method is based on the author's previous work, combined with an inspiration from the work of Breuil-Herzig.

Soit $p>2$ un premier. Soit $\rho$ une repr\'esentation cristalline de $G_{\Qp}$ avec des poids distincts de Hodge-Tate dans $ [0, p] $, de telle sorte que sa r\'eduction $ \barrho $ soit triangulaire sup\'erieure. Dans certaines conditions, nous prouvons que $ \barrho $ a une \'el\'evation cristalline triangulaire sup\'erieure $ \rho'$ telle que $ \HT (\rho') = \HT (\rho) $. La m\'ethode est bas\'ee sur le travail ant\'erieur de l'auteur, combin\'e avec une inspiration de l'oeuvre de Breuil-Herzig.
 \end{abstract}

\maketitle
\pagestyle{myheadings}
\markright{A note on crystalline liftings in the $\Qp$ case}

\tableofcontents

%#############################################################

\section{Introduction} \label{section intro}

\renewcommand{\O}{\mathcal{O}}
\newcommand{\ve}{\varepsilon} %%for those idempotents
\newcommand{\fS}{\mathfrak{S}}

\newcommand{\gr}{{\rm gr}}
\newcommand{\fijd}{{\Fil ^{\{m_{ij }\}}\D}}

\newcommand{\mij}{\{m_{ij}\}}
%% For \Fil^{\mij}, to be \Fil ^{\{m_{ij}\}}

\newcommand{\mfixedi}{\{m_{i,0},\ldots,m_{i,e-1}\}}
%% \Fil^{\mfixedi} to be \Fil^{\{m_{i,0},\ldots,m_{i,e-1}\}}

%\newcommand{\nall0}{\{n,0,\ldots,0\}} %% for \Fil^{\{n,0,\ldots,0\}}
%\newcommand{\pall0}{\{p,0,\ldots,0\}}  %% for \Fil^{\{p,0,\ldots,0\}}
%\newcommand{\mall0}{\{m,0,\ldots,0\}}
%\newcommand{\riall0}{\{r_i,0,\ldots,0\}}

\newcommand{\mf}{\mathfrak}

\newcommand{\Aphi}{A_{\varphi}}
\newcommand{\Atau}{A_{\tau}}

\subsection{Overview}
Given (a lattice in) a crystalline representation, it is natural to study its reduction. Conversely, given a representation over an $\Fpbar$-vector space, it is natural to consider its crystalline lifts. We are particularly interested with crystalline representations, because they will have applications to weight part of Serre's conjectures (see e.g. \cite{GLS14, GLS15, Gao15unram}). In general, both these questions are notoriously difficult. For example, given an $\Fpbar$-representation, we do not even know if it has any crystalline lift. However, for applications to weight part of Serre's conjectures, we can \emph{assume} at the beginning that certain $\Fpbar$-representation already have at least one crystalline lift; the key point then is to show that it has some other \emph{nicer} crystalline lift. And this is what we do in this paper.

To state our main result, we introduce some notations first.
Let $G_{\Qp}:=\Gal(\Qpbar/\Qp)$ be the Galois group of $\Qp$. Let $E/\Qp$ be a finite extension, $\mathcal O_E$ the ring of integers, $\omega_E$ a fixed uniformizer, and $k_E=\mathcal O_E/\omega_E\mathcal O_E$ the residue field.
We will use the following notations often, (\textnormal{\textbf{CRYS}}):
\begin{itemize}
  \item Let $p>2$ be an odd prime. Let $V$ be a crystalline representation of $G_{\Qp}$ of $E$-dimension $d$, such that the Hodge-Tate weights $\HT(V)=\{ 0 = r_1 < \ldots <r_d \leq p\}$.
   \item Let $\rho=T$ be a $G_{\Qp}$-stable $\mathcal O_E$-lattice in $V$, and $\hat \huaM \in \Mod_{\huaS_{\mathcal O_E}}^{\varphi, \Ghat}$ the $(\varphi, \Ghat)$-module (with $\O_E$-coefficient) attached to $T$. Let $\barrho: =T/\omega_ET$ be the reduction. Let $\barhatm$ be the reduction of $\hatm$, and $\barm$ the reduction of $\huaM$.
\end{itemize}

\begin{thm} \label{introthm}
With notations in (\textnormal{\textbf{CRYS}}). Suppose that $\barrho$ is upper triangular, i.e., $\barrho$ is a successive extension of $d$ characters: $\barchi_1, \ldots, \barchi_d$. Suppose $\barchi_i \barchi_j^{-1} \neq \overline{\varepsilon}_p, \forall i \neq j$, where $\overline{\varepsilon}_p$ is the reduction of the cyclotomic character.
Then there exists an upper triangular crystalline representation $\rho'$ such that $\barrho' \cong \barrho$, and $\HT(\rho')=\HT(\rho)$ as sets.
\end{thm}

Theorem \ref{introthm} strengthens \cite[Cor. 0.2(1)]{Gao15unram} in the $\Qp$-case, and of course have direct application to weight part of Serre's conjectures as in \textit{loc. cit.}. In our Theorem \ref{introthm},
\begin{itemize}
  \item we do not require the Condition \textbf{(C-1)} of \cite[\S 3]{Gao15unram}, and
  \item we only require a weaker version of Condition \textbf{(C-2A)} of \cite[\S 6]{Gao15unram}.
  \item Note that Condition \textbf{(C-2B)} of \cite[\S 6]{Gao15unram} in general will never be satisfied in our current paper.
\end{itemize}
Let us also remark that Condition \textbf{(C-1)} seems to be the most difficult condition to remove in \cite{Gao15unram}.

The proof of our theorem still uses results in \cite{Gao15unram} to study the possible shape of upper triangular reductions of crystalline representations. The difference in the current paper is a different crystalline lifting technique, which is inspired by some group theory developed in \cite{BH15}. Roughly speaking, we can use the group theory to conjugate our upper triangular $\barrho$ to another upper triangular form, which can be lifted to an \emph{ordinary} (in particular, upper triangular) crystalline representation via the result of \cite{GG12}. The lifting process via \loccit is in some sense easier than those used in \cite{Gao15unram} (which is generalization of methods in \cite{GLS14, GLS15}). However, we can only apply this technique in the $\Qp$-case, because it seems that we cannot apply the group theory in \cite{BH15} to deal with general $K/\Qp$ case for our problem. Let us remark that our current paper shows a much refined structure for upper triangular reductions of crystalline representations. It is also worth pointing out that our result gives a very \emph{natural} example (see \eqref{BC}) for some of the group theories in \cite{BH15}.

The paper is organized as follows. In Section \ref{section: Kisin module review}, we review the theory of Kisin modules and $(\varphi, \hat G)$-modules with $\O_E$-coefficients.
In Section \ref{section: BH review}, we review the group theory in \cite{BH15}. In Section \ref{section: Shape study}, we study the shape of upper triangular torsion $(\varphi, \hat G)$-modules, using results in \cite{Gao15unram}, as well as techniques inspired by the group theory in Section \ref{section: BH review}. Finally in Section \ref{section: lifting}, we prove our crystalline lifting theorem.

\subsection{Notations.}
The notations in the following are taken directly from \cite{Gao15unram}. In particular, they are valid for any finite extension $K/\Qp$ (and we use $K_0$ to denote the maximal unramified sub-extension of $K$, and $k$ the residue field of $K$). See \loccit for any unfamiliar terms and more details.

In this paper, we sometimes use boldface letters (e.g., $\bolde$) to mean a sequence of objects (e.g., $\bolde=(e_1, \ldots, e_d)$ a basis of some module). We use $\Mat(?)$ to mean the set of matrices with elements in $?$. We use notations like $[u^{r_1}, \ldots, u^{r_d}]$ to mean a diagonal matrix with the diagonal elements in the bracket. We use $Id$ to mean the identity matrix. For a matrix $A$, we use $\diag A$ to mean the diagonal matrix formed by the diagonal of $A$.

In this paper, \textbf{upper triangular} always means successive extension of rank-$1$ objects.
We use notations like $\mathcal E(m_d, \ldots, m_1)$ (note the order of objects) to mean the set of all upper triangular extensions of rank-1 objects in certain categories. That is, $m$ is in $\mathcal E(m_d, \ldots, m_1)$ if there is an increasing filtration $0=\Fil^0 m \subset \Fil^1 m \subset \ldots \subset \Fil^d m =m$ such that $\Fil^i m /\Fil^{i-1}m =m_i, \forall 1 \leq i \leq d$.

We normalize the Hodge-Tate weights so that $\HT_{\kappa}(\varepsilon_p)={1}$ for any $\kappa: K \to \overline{\Qp}$, where $\varepsilon_p$ is the $p$-adic cyclotomic character.

We fix a system of elements $\{\pi_n\}_{n=0}^{\infty}$ in $\barK$, where $\pi_0=\pi$ is a uniformizer of $K$, and $\pi_{n+1}^p=\pi_n, \forall n$. Let $K_n=K(\pi_n), K_{\infty}=\cup_{n=0}^{\infty}K(\pi_n)$, and $G_{\infty}:=\Gal(\barK/K_{\infty})$.
We fix a system of elements $\{\mu_{p^n}\}_{n=0}^{\infty}$ in $\barK$, where $\mu_1=1$, $\mu_p$ is a primitive $p$-th root of unity, and $\mu_{p^{n+1}}^p=\mu_{p^n}, \forall n$.
Let
$K_{p^{\infty}} = \cup_{n=0}^{\infty} K(\mu_{p^n})$, and $\hat{K}=K_{\infty, p^{\infty}} = \cup_{n=0}^{\infty} K(\pi_n, \mu_{p^n}).$
Note that $\hat{K}$ is the Galois closure of $K_{\infty}$, and let
$\Ghat =\Gal(\hat{K}/K)$, $H_K= \Gal(\hat{K}/K_{\infty})$, and $G_{p^{\infty}} = \Gal(\hat{K}/K_{p^{\infty}}).$
When $p>2$, then $\hat G \simeq G_{p^{\infty}} \rtimes H_K$ and $G_{p^{\infty}} \simeq \Zp(1)$, and so we can (and do) fix a topological generator $\tau$ of $G_{p^{\infty}}$. And we can furthermore assume that $\mu_{p^n}=\frac{\tau(\pi_n)}{\pi_n}$ for all $n$.

Let $C=\hat{\barK}$ be the completion of $\barK$, with ring of integers $\mathcal O_C$. Let $R: = \varprojlim \mathcal O_C/p$ where the transition maps are $p$-th power map. $R$ is a valuation ring with residue field $\bar k$ ($\bar k$ is the residue field of $C$). $R$ is a perfect ring of characteristic $p$. Let $W(R)$ be the ring of Witt vectors. Let $\underline \epsilon :=(\mu_{p^n})_{n=0}^{\infty} \in R$, $\underline \pi =(\pi_n)_{n=0}^{\infty} \in R$, and let $[\underline \epsilon], [\underline \pi]$ be their Teichm\"{u}ller representatives respectively in $W(R)$. We normalize the valuation on $R$ so that $v_R(\underline \pi)=\frac{1}{e}$, where $e$ is the ramification index of $K/\Qp$.

There is a map $\theta: W(R) \to \mathcal O_C$ which is the unique universal lift of the map $R \to \mathcal O_C/p$ (projection of $R$ onto the its first factor), and $\Ker \theta$ is a principle ideal generated by $\xi= [\overline \omega]+p$, where $\overline{\omega} \in R$ with $\omega^{(0)}=-p$, and $[\overline \omega] \in W(R)$ its Teichm\"{u}ller representative. Let $B_{\rm{dR}}^{+} := \varprojlim_{n} W(R)[\frac{1}{p}]/(\xi)^n$, and $B_{\rm{dR}}:=B_{\rm{dR}}^{+}[\frac{1}{\xi}]$. Let $t: = \log([\underline{\epsilon}])$, which is an element in $B_{\rm{dR}}^{+}$.
Let $A_{\rm cris}$ denote the $p$-adic completion of the divided power envelope of $W(R)$ with respect to $\Ker(\theta)$. Let $B_{\rm cris}^{+} = A_{\rm cris}[1/p]$ and $B_{\rm cris}:= B_{\rm cris}^{+}[\frac{1}{t}]$.
The projection from $R$ to $\overline{k}$ induces a projection $\nu : W(R) \to W(\overline{k})$, since $\nu(\Ker \theta) = pW(\overline{k})$, the projection extends to $\nu: \Acris \to W(\overline{k})$, and also $\nu: B_{\text{cris}}^{+} \to W(\overline{k})[\frac{1}{p}]$. Write $I_{+}B_{\text{cris}}^{+}:=\Ker(\nu: B_{\text{cris}}^{+} \to W(\overline{k})[\frac{1}{p}]),$ and for any subring $A \subseteq B_{\text{cris}}^{+}$, write $I_{+}A = A\cap \Ker(\nu)$.
%Let $B_{\rm st}:=B_{\rm cris}[X]$ where $X$ is an indeterminate. There are natural Frobenius actions, monodromy actions and filtration structures on $B_{\rm cris}$ and $B_{\rm st}$, which we omit the definition. We have the natural embeddings $B_{\rm cris} \subset B_{\rm st} \subset B_{\rm{dR}}$.

Let $\huaS: = W(k)\llb u\rrb$, $E(u)\in W(k)[u]$ the minimal polynomial of $\pi$ over $W(k)$, and $S$ the $p$-adic completion of the PD-envelope of $\huaS$ with respect to the ideal $(E(u))$.
We can embed the $W(k)$-algebra $W(k)[u]$ into $W(R)$ by mapping $u$ to $[\underline \pi]$. The embedding extends to the embeddings $\huaS \hookrightarrow S \hookrightarrow A_{\rm cris}$.

\section{Kisin modules and $(\varphi, \hat G)$-modules} \label{section: Kisin module review}
In this section, we briefly review some facts in the theory of Kisin modules and $(\varphi, \Ghat)$-modules with $\mathcal O_E$-coefficients.
The materials in this section are based on works of \cite{Kis06, Liu10, CL11, GLS14, Lev14} etc.. But here we only cite them in the form as in \cite[\S 1]{Gao15unram}, where the readers can find more detailed attributions.

\subsection{Kisin modules and $(\varphi, \hat G)$-modules with coefficients}
In this subsection, all the definitions and results are valid for any finite extension $K/\Qp$.

Recall that $\mathfrak{S}=W(k)[\![u]\!]$ with the Frobenius endomorphism $\varphi_{\huaS}: \huaS \to \huaS$ which acts on $W(k)$ via arithmetic Frobenius and sends $u$ to $u^p$. Denote
$\huaS_{\mathcal O_E}:= \huaS \otimes_{\Zp}\mathcal O_E$ and $\huaS_{k_E}:= \huaS \otimes_{\Zp}k_E = k[\![u]\!] \otimes_{\Fp} k_E.$
We can extend $\varphi_{\huaS}$ to $\huaS_{\mathcal O_E}$ (resp. $\huaS_{k_E}$) by acting on $\mathcal O_E$ (resp. $k_E$) trivially. Let $r$ be any nonnegative integer.

\begin{itemize}
 \item  Let $'\Mod_{\huaS_{\mathcal O_E}}^{\varphi}$ (called the category of Kisin modules of height $r$ with $\mathcal O_E$-coefficients) be the category whose objects are $\huaS_{\mathcal O_E}$-modules $\huaM$, equipped with $\varphi:\huaM\to\huaM$ which is a
$\varphi_{\huaS_{\mathcal O_E}}$-semi-linear morphism such that the span of $\text{Im}(\varphi)$ contains $E(u)^{r}\huaM$.
The morphisms in the category are $\huaS_{\mathcal O_E}$-linear maps that commute with $\varphi$.

\item Let $\Mod_{\huaS_{\mathcal O_E}}^{\varphi}$ be the full subcategory of $'\Mod_{\huaS_{\mathcal O_E}}^{\varphi}$ with $\huaM \simeq \oplus_{i \in I}\huaS_{\mathcal O_E}$ where $I$ is a finite set.
 Let $\Mod_{\huaS_{k_E}}^{\varphi}$ be the full subcategory of $'\Mod_{\huaS_{\mathcal O_E}}^{\varphi}$ with $\huaM \simeq \oplus_{i \in I}\huaS_{k_E}$ where $I$ is a finite set.
\end{itemize}

For any integer $n \geq 0$, write $n =(p-1)q(n)+r(n)$ with $q(n)$ and $r(n)$ the quotient and residue of $n$ divided by $p-1$. Let $\tn=(p^{q(n)}\cdot q(n)!)^{-1}\cdot t^n$, we have $\tn \in A_{\text{cris}}$.
We define a subring of $B_{\text{cris}}^{+}$,
$\mathcal{R}_{K_0} :=\left\{ \sum_{i=0}^{\infty} f_i t^{\{i\}} , f_i \in S_{K_0}, f_i \to 0 \text{ as } i \to \infty \right\}.$
Define $\hat{\mathcal{R}}:= \mathcal{R}_{K_0} \cap W(R)$. Then $\Rhat$ is a $\varphi$-stable subring of $W(R)$, which is also $G_K$-stable, and the $G_K$-action factors through $\Ghat$. Denote $\Rhat_{\mathcal O_E}:= \Rhat \otimes_{\Zp}\mathcal O_E$, $W(R)_{\mathcal O_E}:= W(R) \otimes_{\Zp}\mathcal O_E,$
and extend the $G_K$-action and $\varphi$-action on them by acting on $\mathcal O_E$ trivially.
Note that $\huaS_{\mathcal O_E} \subset \hat R_{\mathcal O_E}$, and let $\varphi: \huaS_{\mathcal O_E} \to \hat R_{\mathcal O_E}$ be the composite of $\varphi_{\huaS_{\mathcal O_E}}: \huaS_{\mathcal O_E} \to \huaS_{\mathcal O_E}$ and the embedding $\huaS_{\mathcal O_E} \to \hat R_{\mathcal O_E}$.

\begin{comment}
Also we have $\Rhat/I_{+}\Rhat \simeq S/I_{+}S \simeq \huaS/u\huaS \simeq W(k)$.
\end{comment}

\begin{defn}
Let $'\Mod_{\huaS_{\mathcal O_E}}^{\varphi, \Ghat}$ be the category (called the category of $(\varphi, \Ghat)$-modules of height $r$ with $\mathcal O_E$-coefficients) consisting of triples $(\huaM, \varphi_{\huaM}, \Ghat)$ where,
\begin{enumerate}
\item $(\huaM, \varphi_{\huaM}) \in '\Mod_{\huaS_{\mathcal O_E}}^{\varphi}$ is a Kisin module of height $r$;
\item $\Ghat$ is a $\Rhat_{\mathcal O_E}$-semi-linear $\Ghat$-action on $\mhat := \Rhat_{\mathcal O_E} \otimes_{\varphi, \huaS_{\mathcal O_E}} \huaM$;
\item $\Ghat$ commutes with $\varphi_{\mhat} : =\varphi_{\Rhat_{\mathcal O_E}}\otimes \varphi_{\huaM}$;
\item Regarding $\huaM$ as a $\varphi(\huaS_{\mathcal O_E})$-submodule of $\mhat$, then $\huaM \subseteq \mhat^{H_K}$;
\item $\Ghat$ acts on the $\mhat/(I_{+}\hat{R})\mhat$ trivially.
\end{enumerate}
A morphism between two $(\varphi, \Ghat)$-modules is a morphism in $\Mod_{\huaS_{\mathcal O_E}}^{\varphi}$ which commutes with $\Ghat$-actions.
\end{defn}

We denote $\Mod_{\huaS_{\mathcal O_E}}^{\varphi, \Ghat}$ to be the full subcategory of $'\Mod_{\huaS_{\mathcal O_E}}^{\varphi, \Ghat}$ where $\huaM \in \Mod_{\huaS_{\mathcal O_E}}^{\varphi}$;
and we denote $\Mod_{\huaS_{k_E}}^{\varphi, \Ghat}$ for the full subcategory of $'\Mod_{\huaS_{\mathcal O_E}}^{\varphi, \Ghat}$ where $\huaM \in \Mod_{\huaS_{k_E}}^{\varphi}$.

We can associate representations to $(\varphi, \Ghat)$-modules.

\begin{thm}\cite[Thm. 1.2, Thm. 1.4]{Gao15unram}  \hfill
\begin{enumerate}
\item Suppose $\mhat \in \Mod_{\huaS_{\mathcal O_E}}^{\varphi, \Ghat}$ where $\huaM$ is of $\huaS_{\mathcal O_E}$-rank $d$, then
      $$ \That(\mhat) := \Hom_{\Rhat, \varphi} (\mhat, W(R))$$
      is a finite free $\mathcal O_E$-representation of $G_K$ of rank $d$.

    \item Suppose $\mhat \in \Mod_{\huaS_{k_E}}^{\varphi, \Ghat}$ where $\huaM$ is of $\huaS_{k_E}$-rank $d$, then
      $$ \That(\mhat) := \Hom_{\Rhat, \varphi} (\mhat, W(R)\otimes_{\Zp}\Qp/\Zp)$$
      is a finite free $k_E$-representation of $G_K$ of dimension $d$.

     \item   For $\mhat \in \Mod_{\huaS_{\mathcal O_E}}^{\varphi, \Ghat}$, we have $\That(\mhat/\omega_E\mhat) \simeq \That(\mhat)/\omega_E\That(\mhat)$.
\end{enumerate}
\end{thm}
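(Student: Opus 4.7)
The plan is to bootstrap the $\mathcal O_E$-coefficient statements from the classical $\Zp$-coefficient theorem of Liu, so the first task is to set up the right comparison. For part (1), I would write $\huaS_{\mathcal O_E} = \huaS \otimes_{\Zp} \mathcal O_E$ and observe that, since $\mathcal O_E$ acts trivially on $\huaS$ and on $W(R)$, a free $\huaS_{\mathcal O_E}$-module $\huaM$ of rank $d$ is, underlying, a free $\huaS$-module of rank $d \cdot [E : \Qp]$. The triple $(\huaM,\varphi_{\huaM},\Ghat)$ forgets to an object of the classical category of $(\varphi,\Ghat)$-modules over $\huaS$, so Liu's theorem gives that $\Hom_{\Rhat,\varphi}(\mhat, W(R))$ is a free $\Zp$-representation of $G_K$ of rank $d\cdot[E:\Qp]$. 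The $\mathcal O_E$-action on $\huaM$ (commuting with $\varphi$) transfers through the Hom to an $\mathcal O_E$-action on $\That(\mhat)$, and freeness over $\mathcal O_E$ of the correct rank then follows by checking that the resulting $\mathcal O_E$-module has the right $\Zp$-rank together with a residue-field count after tensoring with $k_E$.

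For part (2), I would reduce the torsion case to (1) via a two-term resolution: any $\mhat\in\Mod_{\huaS_{k_E}}^{\varphi,\Ghat}$ arises as $\mhat''/\mhat'$ for a saturation pair inside $\Mod_{\huaS_{\mathcal O_E}}^{\varphi,\Ghat}$ — this is the standard device of lifting a torsion Kisin module to a $p$-power exact sequence of free ones. Applying $\That$ to the short exact sequence $0 \to \mhat' \to \mhat'' \to \mhat \to 0$ and using $\Hom(-, W(R)\otimes\Qp/\Zp)$ converts the exact sequence into a short exact sequence of $k_E$-modules, and comparison with part (1) produces the dimension count. The crucial input here is the exactness of $\That$, which in the classical setting is a consequence of the vanishing of $\Ext^1_{\Rhat,\varphi}(-,W(R))$ on Kisin modules of height $r\leq p$; this vanishing is stable under $\mathcal O_E$-extension of scalars.

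For part (3), I would apply $\That$ to the tautological sequence
\[
0 \longrightarrow \mhat \xrightarrow{\ \omega_E\ } \mhat \longrightarrow \mhat/\omega_E\mhat \longrightarrow 0,
\]
and again invoke the exactness of the functor on this subcategory to obtain the identification $\That(\mhat/\omega_E\mhat) \simeq \That(\mhat)/\omega_E\That(\mhat)$. The $G_K$-equivariance on both sides is automatic from functoriality of $\That$, and comparing $k_E$-dimensions with the rank count from (2) confirms the map is an isomorphism rather than merely a surjection.

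The main obstacle will be part (2): even granting Liu's $\Zp$-case, one must check that the necessary $\Ext^1$-vanishing and the lift-of-torsion-by-free construction really do carry through under $\mathcal O_E$-extension of scalars without losing the Hodge-type bound on the height of the resolving objects; the coefficient ring $\huaS_{\mathcal O_E}$ is no longer a power series ring over a DVR in the strict sense (it is a product, after base change to $\bar{k}_E$, of copies of $\huaS\otimes_{W(k)}W(k\cdot k_E)\llb u\rrb$), so one has to argue idempotent-component-wise and then reassemble. Once this reduction is carried out carefully, the rest of the proof reduces to a clean application of the classical results cited in Section 2.
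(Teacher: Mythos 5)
The paper does not prove this theorem: it is imported verbatim (as a recalled fact) from the reference cited in the statement, so there is no internal argument in the present paper against which to compare your sketch. What follows is therefore a comment on the viability of your sketch on its own terms, measured against the standard route in the literature (Liu, Caruso--Liu, GLS) which is what the cited source follows.

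Your reduction for part (1) — forget $\mathcal O_E$ down to $\Zp$, apply Liu's theorem, recover the $\mathcal O_E$-structure by functoriality, and then establish $\mathcal O_E$-freeness of the correct rank by a length/Nakayama count — is essentially the correct argument. For part (3), the key input is indeed the exactness of $\That$ on the sequence $0 \to \mhat \xrightarrow{\omega_E} \mhat \to \mhat/\omega_E\mhat \to 0$, and your plan is sound. The real gap is in part (2): you assert that an arbitrary $\mhat\in\Mod_{\huaS_{k_E}}^{\varphi,\Ghat}$ can be written as $\mhat''/\mhat'$ with $\mhat'',\mhat'$ free objects in $\Mod_{\huaS_{\mathcal O_E}}^{\varphi,\Ghat}$. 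For bare Kisin modules this is already a nontrivial lifting theorem, and it is genuinely harder for $(\varphi,\Ghat)$-modules because one must also lift the $\Ghat$-action compatibly; no $\Ext^1$-vanishing statement that you can cheaply base-change will produce this for you, and the phrasing ``vanishing of $\Ext^1_{\Rhat,\varphi}(-,W(R))$'' is not the mechanism used in the sources (exactness of $\That$ is proven by direct analysis of the $\varphi$-module structure via the étale $\varphi$-module functor, not as an $\Ext$ computation). The standard way to avoid this gap entirely is to prove (2) directly: an object of $\Mod_{\huaS_{k_E}}^{\varphi}$ gives, after inverting $u$, an étale $\varphi$-module over $k\llb u\rrb[1/u]\otimes_{\Fp}k_E$, and Fontaine's equivalence already yields a $d$-dimensional $k_E[G_\infty]$-representation; the $\Ghat$-structure then extends this to $G_K$ with no change of dimension. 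Proving (3) independently by exactness, and reconciling (2) and (3) afterward, is cleaner than trying to deduce (2) from a free resolution whose existence you would otherwise have to establish.
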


When $p>2$, the theory of $(\varphi, \Ghat)$-modules becomes simpler.
\begin{lemma}\cite[Lem. 1.6]{Gao15unram} \label{pnot2}
Suppose $p>2$. Let $\mhat \in \Mod_{\huaS_{\mathcal O_E}}^{\varphi, \Ghat}$. Then $\mhat$ is uniquely determined up to isomorphism by the following information:
\begin{enumerate}
  \item A matrix $A_{\varphi} \in \Mat(\huaS_{\mathcal O_E})$ for the Frobenius $\varphi: \huaM \to \huaM$, such that there exist $B \in \Mat(\huaS_{\mathcal O_E})$ with $A_{\varphi}B=E(u)^r Id$.
   \item A matrix $A_{\tau} \in \Mat(\hat{R}_{\mathcal O_E})$ (for the $\tau$-action $\tau: \mhat \to \mhat$) such that
   \begin{itemize}
     \item $A_{\tau}-Id \in \Mat(I_{+}\Rhat_{\mathcal O_E}),$
     \item $A_{\tau}\tau(\varphi(A_{\varphi}))=\varphi(A_{\varphi})\varphi(A_{\tau}).$
     \item $g(A_{\tau}) = \prod_{k=0}^{\varepsilon_p(g)-1} \tau^k(A_{\tau})$ for all $g\in G_{\infty}$ such that $\varepsilon_p(g) \in \mathbb{Z}^{\geq 0}$.
   \end{itemize}
\end{enumerate}
For $\barhatm \in \Mod_{\huaS_{k_E}}^{\varphi, \Ghat}$, it is also uniquely determined up to isomorphism by its matrix $A_{\varphi}$ and $A_{\tau}$ satisfying similar conditions as above.
\end{lemma}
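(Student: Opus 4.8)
The plan is to convert a $(\varphi,\Ghat)$-module into matrix data by choosing bases, exploiting $p>2$ to replace the full $\Ghat$-action by the single automorphism $\tau$. So suppose first that $\mhat\in\Mod_{\huaS_{\OE}}^{\varphi,\Ghat}$ is given, with $\huaM$ free of rank $d$ over $\huaS_{\OE}$. I would fix an $\huaS_{\OE}$-basis $\bolde=(e_1,\dots,e_d)$ of $\huaM$ and set $\varphi_{\huaM}(\bolde)=\bolde A_\varphi$; the condition that the $\huaS_{\OE}$-span of $\Im(\varphi_{\huaM})$ contain $E(u)^r\huaM$ translates precisely into the existence of $B\in\Mat(\huaS_{\OE})$ with $A_\varphi B=E(u)^r\,Id$ (and then $BA_\varphi=E(u)^r\,Id$ too, as $E(u)$ is a nonzerodivisor). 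Since $\mhat=\Rhat_{\OE}\otimes_{\varphi,\huaS_{\OE}}\huaM$ is free over $\Rhat_{\OE}$ with basis $1\otimes\bolde$ and $\varphi_{\mhat}=\varphi_{\Rhat_{\OE}}\otimes\varphi_{\huaM}$, a one-line computation gives that the matrix of $\varphi_{\mhat}$ in this basis is $\varphi(A_\varphi)$.

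Next I would pin down the $\Ghat$-action; this is where $p>2$ is essential, as it gives $\Ghat\simeq G_{p^\infty}\rtimes H_K$ with $G_{p^\infty}\simeq\Zp(1)$ topologically generated by $\tau$, so that a continuous $\Ghat$-action amounts to an $H_K$-action together with the single automorphism $\tau$, glued by $g\tau g^{-1}=\tau^{\varepsilon_p(g)}$ for $g\in H_K$. Condition (4) forces $1\otimes\bolde\subset\mhat^{H_K}$, and since the $H_K$-action on $\mhat$ is $\Rhat_{\OE}$-semilinear with $\Rhat_{\OE}$ already carrying its $H_K$-action inherited from $W(R)_{\OE}$, this determines the $H_K$-action on $\mhat$ completely, with no parameter left; moreover the $H_K$-requirements (condition (4) and the $H_K$-portions of (3) and (5)) then hold automatically, using $\varphi(\huaS_{\OE})\subset W(R)_{\OE}^{H_K}$ and that $H_K$ acts trivially on $\Rhat_{\OE}/I_{+}\Rhat_{\OE}\simeq W(k)\otimes_{\Zp}\OE$. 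The only remaining datum is $A_\tau\in\Mat(\Rhat_{\OE})$ defined by $\tau(1\otimes\bolde)=(1\otimes\bolde)A_\tau$, and one reads off that condition (5) for $\tau$ says $A_\tau-Id\in\Mat(I_{+}\Rhat_{\OE})$, that commutation of $\tau$ with $\varphi_{\mhat}$ says $A_\tau\,\tau(\varphi(A_\varphi))=\varphi(A_\varphi)\,\varphi(A_\tau)$, and that, since iterating $\tau(1\otimes\bolde)=(1\otimes\bolde)A_\tau$ gives the matrix $\prod_{k=0}^{n-1}\tau^k(A_\tau)$ for $\tau^n$, the relation $g\tau=\tau^{\varepsilon_p(g)}g$ on the $H_K$-fixed basis $1\otimes\bolde$ forces $g(A_\tau)=\prod_{k=0}^{\varepsilon_p(g)-1}\tau^k(A_\tau)$ whenever $\varepsilon_p(g)\in\Z^{\geq0}$. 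This extracts the asserted data from $\mhat$.

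For the converse I would start from a pair $(A_\varphi,A_\tau)$ with the listed properties, put $\huaM=\huaS_{\OE}^{\oplus d}$ with $\varphi_{\huaM}=A_\varphi$ (of height $r$ via $B$), form $\mhat=\Rhat_{\OE}\otimes_{\varphi,\huaS_{\OE}}\huaM$ with $\varphi_{\mhat}$ as above, let $H_K$ act $\Rhat_{\OE}$-semilinearly fixing $1\otimes\bolde$, and define $\tau$ by $A_\tau$. The step I expect to be the \emph{main obstacle} is checking that this assembles into a genuine continuous $\Ghat$-action on $\mhat$: one needs that $A_\tau-Id\in\Mat(I_{+}\Rhat_{\OE})$ makes $A_\tau$ invertible (using completeness of $\Rhat_{\OE}$ together with $\Rhat/I_{+}\Rhat\simeq W(k)$), so that $\tau$ is an automorphism; that the partial products $\prod_{k=0}^{n-1}\tau^k(A_\tau)$ depend continuously on $n\in\Zp$, hence define the $G_{p^\infty}$-action; and that the displayed compatibility for $\varepsilon_p(g)\in\Z^{\geq0}$, extended to all of $H_K$ by density and continuity, supplies exactly the semidirect-product relation needed to glue the $G_{p^\infty}$- and $H_K$-actions into a $\Ghat$-action. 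Granting this, the commutation identity gives $\Ghat$-equivariance of $\varphi_{\mhat}$ and condition (5) is built in, so $\mhat\in\Mod_{\huaS_{\OE}}^{\varphi,\Ghat}$. Finally, two modules giving the same pair for matched bases are isomorphic by construction, while a change of $\huaS_{\OE}$-basis by $Q\in\GL_d(\huaS_{\OE})$ sends $(A_\varphi,A_\tau)$ to $(Q^{-1}A_\varphi\varphi(Q),\,\varphi(Q)^{-1}A_\tau\,\tau(\varphi(Q)))$, which yields the uniqueness up to isomorphism. The $k_E$-coefficient case is identical after replacing $\huaS_{\OE},\Rhat_{\OE}$ by $\huaS_{k_E},\Rhat\otimes_{\Zp}k_E$ and $W(R)$ by $W(R)\otimes_{\Zp}\Qp/\Zp$ in the representation functor, nothing in the argument using flatness over $\OE$.
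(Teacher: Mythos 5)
The paper does not give its own proof of this lemma; it is cited verbatim from \cite[Lem.~1.6]{Gao15unram}. So the only thing to assess is whether your blind reconstruction is sound, and on the whole it is. Your extraction step is correct: a one-line base-extension computation shows the matrix of $\varphi_{\mhat}$ in $1\otimes\bolde$ is $\varphi(A_\varphi)$; the three bulleted conditions on $A_\tau$ are exactly the translations of condition (5) applied to $\tau$, of the commutation $\tau\varphi_{\mhat}=\varphi_{\mhat}\tau$, and of the semidirect-product relation $g\tau g^{-1}=\tau^{\varepsilon_p(g)}$ applied to the $H_K$-fixed basis together with the iteration $\tau^n(1\otimes\bolde)=(1\otimes\bolde)\prod_{k=0}^{n-1}\tau^k(A_\tau)$. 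You also correctly identify that $p>2$ is what makes $\Ghat\simeq G_{p^\infty}\rtimes H_K$ with $G_{p^\infty}\simeq\Zp(1)$, so that conditions (4)--(5) pin the $H_K$-action completely and the rest of the $\Ghat$-action reduces to $\tau$ by continuity and density.

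Two points worth tightening. First, the ``uniquely determined'' claim is really about injectivity: if $\mhat$ and $\mhat'$ admit bases producing the same $(A_\varphi,A_\tau)$, then $e_i\mapsto e_i'$ is a $\varphi$-, $\tau$-, and $H_K$-equivariant isomorphism of $\huaS_{\OE}$-modules, hence $\Ghat$-equivariant by continuity; this is self-contained and does not depend on the converse reconstruction at all. Your phrase ``isomorphic by construction'' makes it look as if it leans on the converse, which is not necessary and would import the gap you flag. Second, for the converse reconstruction (which the lemma statement arguably does not require), the continuity of $n\mapsto\prod_{k=0}^{n-1}\tau^k(A_\tau)$ on $\Zp$ is genuinely delicate: $A_\tau-Id\in\Mat(I_+\Rhat_{\OE})$ alone does not obviously give $p$-adic convergence of $\tau^{p^m}$ to the identity on $\mhat$, and the standard arguments (going back to Liu) use more, e.g.\ that $\tau$ acts trivially modulo $t$ and finer estimates in $\Acris$. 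You are candid that this is the main obstacle and leave it open; just be aware that it is where the real work lies if the converse is wanted, and that it is safer to phrase the injectivity half so as not to depend on it. The passage to $k_E$-coefficients is indeed routine.

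Your base-change formula $(A_\varphi,A_\tau)\mapsto(Q^{-1}A_\varphi\varphi(Q),\,\varphi(Q)^{-1}A_\tau\tau(\varphi(Q)))$ and the identification $\Rhat/I_+\Rhat\simeq W(k)$ are both correct and consistent with the conventions of the paper.
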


\subsection{Rank 1 Kisin modules and $(\varphi, \Ghat)$-modules}
We only recall the following definitions and results in the $\Qp$ case.
\begin{defn}\hfill
\begin{enumerate}
  \item Suppose $t$ is a non-negative integer, $a \in k_{E}^{\times}$. Let $\barm(t; a)$  be the rank-$1$ module in $\Mod_{\huaS_{k_E}}^{\varphi}$ such that $\barm(t; a)$ is generated by some basis $e$, and $\varphi(e)=au^t e$.

  \item  Suppose $t$ is a non-negative integer, $\hat a \in \mathcal O_{E}^{\times}$. Let $\huam(t; \hat a)$  be the rank-$1$ module in $\Mod_{\huaS_{\OE}}^{\varphi}$  such that $\huam(t; \hat a)$ is generated by some basis $\tilde e$, and $\varphi(\tilde e)=\hat{a}(u-p)^t \tilde e$.
\end{enumerate}
\end{defn}

\begin{lemma}\cite[Lem. 1.11]{Gao15unram} \label{lemma: rank 1}\hfill
\begin{enumerate}
  \item Any rank $1$ module in $\Mod_{\huaS_{k_E}}^{\varphi}$ is of the form $\barm(t; a)$ for some $t$ and $a$.
    \item When $\hat a$ is a lift of $a$, $\huaM(t; \hat a)/\omega_E\huaM(t; \hat a) \simeq \barm(t; a)$.
    \item There is a unique $\hatm(t; \hat a) \in \Mod_{\huaS_{\mathcal O_E}}^{\varphi, \Ghat}$ such that the ambient Kisin module of $\hatm(t; \hat a)$ is $\huaM(t; \hat a)$, and $\That(\hatm(t; \hat a))$ is a crystalline character.
    In fact, $\That(\hatm(t; \hat a))= \lambda_{\hat a}  \psi^{t},$ where $\psi$ is a certain crystalline character such that $\HT(\psi)={1}$, and $\lambda_{\hat a}$ is the unramified character of $G_{\Qp}$ which sends the arithmetic Frobenius to $\hat a$.

    \item There is a unique $\overline{\hatm}(t; a) \in \Mod_{\huaS_{k_E}}^{\varphi, \Ghat}$ such that the ambient Kisin module is
    $\barm(t; a)$. Furthermore, $\That(\overline{\hatM}(t; a))$ is the reduction of $\That(\hat \huaM(t; \hat a))$ for any lift $\hat a \in \mathcal O_E$ of $a$.
\end{enumerate}
\end{lemma}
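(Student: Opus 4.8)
The plan is to settle the four assertions in turn; (1) and (2) are elementary normalizations, and the substance is the existence half of (3). For (1): since $K=\Qp$ we have $k=\Fp$, so $\huaS_{k_E}=k_E\llb u\rrb$ is a discrete valuation ring with uniformizer $u$, and the image of $E(u)=u-p$ in it is $u$. For a rank-$1$ object with basis $e$ and $\varphi(e)=fe$, the height-$r$ condition says $u^r\huaM$ lies in the $\huaS_{k_E}$-span of $\Im(\varphi)=f\huaS_{k_E}$, hence $f=u^t v$ with $0\le t\le r$ and $v\in k_E\llb u\rrb^{\times}$; put $a:=v(0)\in k_E^{\times}$. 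A change of basis $e\mapsto ce$ by a unit $c$ replaces $f$ by $(\varphi(c)/c)f$, so it suffices to solve $\varphi(c)/c=a/v\in 1+uk_E\llb u\rrb$ with $c\in 1+uk_E\llb u\rrb$; this is done by successive approximation, using that replacing $c$ by $c(1+\delta u^N)$ alters the degree-$N$ defect by $-\delta$ since $\varphi(c(1+\delta u^N))=\varphi(c)(1+\delta u^{pN})$ and $pN\ge N+1$. This yields $\huaM\cong\barm(t;a)$, with $(t,a)$ unique: $t$ is the $u$-adic valuation of $f$, and if $\varphi(c)/c$ is a nonzero constant then comparing lowest-degree terms forces $c$ to be a nonzero constant, whence $\varphi(c)=c$ and the constant is $1$. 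Part (2) is then immediate: reducing $\huaM(t;\hat a)$ modulo $\omega_E$ one has $u-p\equiv u$ (as $p\in\omega_E\OE$) and $\hat a\equiv a$, so $\varphi$ becomes multiplication by $au^t$, i.e. the reduction is $\barm(t;a)$.

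For (3) I would separate uniqueness of the $(\varphi,\Ghat)$-structure from existence of a crystalline one. Uniqueness is soft: by Lemma~\ref{pnot2} a structure on $\huaM(t;\hat a)$ amounts to a matrix $A_{\tau}\in\Mat(\Rhat_{\OE})$ with $A_{\tau}-Id\in\Mat(I_{+}\Rhat_{\OE})$ and $A_{\tau}\,\tau(\varphi(A_{\varphi}))=\varphi(A_{\varphi})\,\varphi(A_{\tau})$ for the prescribed $A_{\varphi}=\hat a(u-p)^t$ (and in rank $1$ this datum is literally unique, as rescaling the basis by $\OE^{\times}$ changes neither $A_{\varphi}$ nor $A_{\tau}$); if $A_{\tau}$ and $A_{\tau}'$ both work then $B:=A_{\tau}(A_{\tau}')^{-1}$ satisfies $\varphi(B)=B$, so $B\in(W(R)^{\varphi=1})\otimes_{\Zp}\OE=\OE$, and $B\equiv 1$ modulo $I_{+}$ forces $B=1$ since $\nu$ restricts to the identity on $\OE$. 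For existence I would appeal to the integral $p$-adic Hodge theory recalled above: a crystalline character of $G_{\Qp}$ of Hodge-Tate weight $t$ has an associated rank-$1$ object of $\Mod_{\huaS_{\OE}}^{\varphi,\Ghat}$ whose ambient Kisin module, being rank $1$ of height $t$, is isomorphic to some $\huaM(t';\hat a')$ by the $\OE$-analogue of (1) (the same descent, run in increasing powers of $u$, using that $\OE\llb u\rrb$ is a UFD in which $u-p$ is prime), and matching Hodge-Tate weights and the unramified part pins down $(t',\hat a')$; applied to a suitable unramified twist of a power of the cyclotomic character this produces $\hatm(t;\hat a)$ with $\That(\hatm(t;\hat a))$ crystalline. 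Finally set $\psi:=\That(\hatm(1;1))$, so $\HT(\psi)=1$; since $\huaM(t;\hat a)$ is the ambient Kisin module of $\hatm(1;1)^{\otimes t}\otimes\hatm(0;\hat a)$ --- checked directly on $\varphi$ --- uniqueness gives $\hatm(t;\hat a)\cong\hatm(1;1)^{\otimes t}\otimes\hatm(0;\hat a)$, and applying $\That$ yields $\That(\hatm(t;\hat a))=\psi^{t}\otimes\lambda_{\hat a}=\lambda_{\hat a}\psi^{t}$, with $\That(\hatm(0;\hat a))=\lambda_{\hat a}$ the standard identification for the unramified twist of the trivial Kisin module.

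For (4): put $\overline{\hatm}(t;a):=\hatm(t;\hat a)/\omega_E\hatm(t;\hat a)$ for any lift $\hat a$ of $a$; its ambient Kisin module is $\barm(t;a)$ by (2), and $\That(\overline{\hatm}(t;a))$ is the reduction of $\That(\hatm(t;\hat a))$ by the identity $\That(\mhat/\omega_E\mhat)\simeq\That(\mhat)/\omega_E\That(\mhat)$ recalled above. Independence of $\hat a$ and uniqueness of $\overline{\hatm}(t;a)$ among objects of $\Mod_{\huaS_{k_E}}^{\varphi,\Ghat}$ with ambient Kisin module $\barm(t;a)$ follow from the $k_E$-version of the rigidity argument: by Lemma~\ref{pnot2}, $A_{\tau}$ is uniquely determined by $A_{\varphi}=au^t$ together with $A_{\tau}\equiv Id$ modulo $I_{+}$, so all such objects are isomorphic. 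I expect the genuine obstacle to be the existence part of (3): producing a crystalline $(\varphi,\Ghat)$-structure on $\huaM(t;\hat a)$ and identifying its representation as $\lambda_{\hat a}\psi^{t}$ really uses the classification of crystalline characters and their Kisin modules, not formal manipulation, whereas every uniqueness statement reduces to the single observation that a $\varphi$-invariant element of $W(R)_{\OE}$ congruent to $1$ modulo $I_{+}$ must equal $1$.
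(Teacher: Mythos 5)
This lemma is cited in the paper from \cite[Lem.~1.11]{Gao15unram} without proof, so there is no in-paper argument to compare against; what follows assesses your reconstruction on its own terms. Your treatment of (1), (2), and (4) is correct: the normalization $f=u^tv$, the successive approximation $c\mapsto c(1+\delta u^N)$ to absorb $v$ into a constant, the observation that $u-p\equiv u$ and $\hat a\equiv a$ modulo $\omega_E$, and the reduction $\overline{\hatm}(t;a):=\hatm(t;\hat a)/\omega_E$ together with the compatibility $\That(\mhat/\omega_E\mhat)\simeq\That(\mhat)/\omega_E\That(\mhat)$ are all the right moves. The uniqueness argument you run in (3) and (4) --- reducing to $\varphi(B)=B$ with $B\equiv 1$ modulo $I_+$, hence $B\in\OE$ (resp.\ $k_E$) and then $B=1$ since $\nu$ restricts to the identity on the coefficient ring --- is exactly the rigidity one expects and is correct, though you should note that Lemma~\ref{pnot2} is what licenses reducing the full $\Ghat$-datum to the single matrix $A_\tau$.

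The one place your argument is genuinely incomplete is precisely where you say you expect it to be: the existence half of (3). You invoke ``integral $p$-adic Hodge theory'' to produce, for a crystalline character of Hodge--Tate weight~$t$, a rank-one object of $\Mod_{\huaS_{\OE}}^{\varphi,\Ghat}$ with ambient Kisin module $\huaM(t';\hat a')$, and you outline the tensor-decomposition $\hatm(t;\hat a)\cong\hatm(1;1)^{\otimes t}\otimes\hatm(0;\hat a)$; this is the right skeleton, but it rests on unverified facts: that such a $(\varphi,\Ghat)$-module with crystalline $\That$ exists at all for the prescribed $A_\varphi=\hat a(u-p)^t$, that the $\OE$-coefficient analogue of your normalization in (1) applies to identify the ambient Kisin module, and that the contravariant functor $\That=\Hom_{\Rhat,\varphi}(-,W(R))$ produces $\lambda_{\hat a}$ (rather than $\lambda_{\hat a^{-1}}$, which is a live possibility given the $\Hom$ direction) on $\hatm(0;\hat a)$. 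None of these is formal; they require the explicit computation of Kisin modules of crystalline characters, which is the actual content of the cited lemma. Since you flag this candidly and the rest of the scaffolding is sound, this is an honest sketch rather than an error --- but it should not be mistaken for a complete proof of part (3).
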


%#############################################################

\section{Some group theory} \label{section: BH review}

\newcommand{\alg}{\textnormal{alg}}
\newcommand{\Rplus}{R^{+}}
\newcommand{\ee}{\epsilon}

We recall some group theory, which will be useful for our work. All the materials in this section are developed in \cite[\S 2.3]{BH15}, for general split connected reductive groups. But we will only need it for $\GL_d$, which we recall.

Let $H$ be the algebraic group ${\GL}_{d}$, $T$ the torus consisting of diagonal matrices, $B$ the Borel consisting of upper triangular matrices, and $U$ the unipotent radical consisting of unipotent matrices.

We have $X(T): =\Hom_{\alg}(T, \mathbb G_m)=\Z \ee_1\oplus \cdots \oplus \Z \ee_d$, where $\ee_i$ is the character sending the diagonal matrix $[x_1,\ldots, x_d]$ to $x_i$. Let $S=\{\ee_i-\ee_{i+1} : 1\leq i\leq d-1\}$ be the simple roots, and let $R^{+}=\{\ee_i-\ee_j: 1\leq i <j \leq d\}$ be the positive roots. Denote $W$ the Weyl group of $H$, which is isomorphic to the permutation group $S_d$.
If $\alpha=\ee_i -\ee_j \in \Rplus$, let $U_{\alpha} \subset H$ be the root subgroup, which corresponds to the unipotent upper triangular matrices where the only nonzero element above the diagonal is at the $(i, j)$-position.

\begin{defn}
A subset $C \subseteq \Rplus$ is called \emph{closed} if the following condition is satisfied: if $\alpha \in C, \beta \in C$ and $\alpha+\beta \in \Rplus$, then $\alpha+\beta \in C$.
\end{defn}

For a closed subset $C \subseteq \Rplus$, let $U_C \subseteq U$ be the Zariski closed subgroup of $B$ generated by the subgroups $U_{\alpha}$ for all $\alpha \in C$. Let $B_C=TU_C \subseteq B$.
If $C= \{ \ee_{i_1}-\ee_{j_1}, \ldots, \ee_{i_m}-\ee_{j_m} \}$ is a closed subset of $\Rplus$, then it is easy to see that $B_C$ corresponds to the matrices where the only nonzero elements above the diagonal are at the positions $(i_\ell, j_\ell)$ for all $1\leq \ell \leq m$.

Recall that if we let $N_H(T)$ be the normalizer of $T$ in $H$, then $N_H(T)/T$ is isomorphic to $W$. For each $\sigma \in W$ which is a permutation sending $(1, \ldots, d)$ to $(\sigma(1), \ldots, \sigma(d))$, we fix a representative of $\sigma$ in $H$ to be the $d\times d$ matrix $w_\sigma := (\delta_{i, \sigma(j)})_{1 \leq i, j \leq d} = (\delta_{\sigma^{-1}(i), j})$ where the notation $\delta_{x, y}=0$ if $x\neq y$, and $\delta_{x, y}=1$ if $x=y$.
Note that if we have another $d \times d$ matrix $A=(a_{k, l})$, we have the matrix multiplication:
$$ (\delta_{i, \sigma(j)}) (a_{k, l})  =(a_{\sigma^{-1}(k), l}), \quad
 (a_{k, l}) (\delta_{i, \sigma(j)})   = (a_{k, \sigma(l)}),$$
 and so in particular $\sigma^{-1} (a_{i,j}) \sigma = (a_{\sigma(i), \sigma(j)}).$

Let $C \subseteq \Rplus$ closed, we define the following subset of $W$:
$$W_C:=\{\sigma \in W : \sigma^{-1}(C)\subseteq R^+\}.$$

\begin{lemma} \cite[Lem. 2.3.6]{BH15} \label{LemmaBH}
With notations above, we have
$$W_C=\{\sigma \in W : w_{\sigma}^{-1}B_C w_{\sigma}  \subseteq B\}.$$
\end{lemma}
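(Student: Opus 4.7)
The plan is to reduce the set-inclusion statement about $w_\sigma^{-1} B_C w_\sigma$ to the corresponding statement about individual root subgroups, using the explicit matrix formulas that were recorded just above the lemma.

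First I would establish the key transformation rule: for each positive root $\alpha = \ee_i - \ee_j \in R^+$, we have
\[
w_\sigma^{-1} U_\alpha w_\sigma = U_{\sigma^{-1}(\alpha)}.
\]
Since $U_\alpha$ consists of the elements $I + x E_{ij}$ for $x \in \mathbb{G}_a$, this reduces to computing $w_\sigma^{-1} E_{ij} w_\sigma$. Applying the identity $\sigma^{-1}(a_{k,l})\sigma = (a_{\sigma(k), \sigma(l)})$ recorded in the excerpt to the elementary matrix $E_{ij}$, one gets $w_\sigma^{-1} E_{ij} w_\sigma = E_{\sigma^{-1}(i), \sigma^{-1}(j)}$, i.e.\ the root subgroup attached to $\ee_{\sigma^{-1}(i)} - \ee_{\sigma^{-1}(j)} = \sigma^{-1}(\alpha)$, which is an element of the set of roots (positive or negative).

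Next I would note that $w_\sigma \in N_H(T)$ implies $w_\sigma^{-1} T w_\sigma = T$, so
\[
w_\sigma^{-1} B_C w_\sigma = T \cdot w_\sigma^{-1} U_C w_\sigma,
\]
and since $U_C$ is generated by the $U_\alpha$ for $\alpha \in C$, the conjugated group is generated by $T$ and the $U_{\sigma^{-1}(\alpha)}$ for $\alpha \in C$.

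For the inclusion $W_C \subseteq \{\sigma : w_\sigma^{-1} B_C w_\sigma \subseteq B\}$: if $\sigma^{-1}(C) \subseteq R^+$, then each generator $U_{\sigma^{-1}(\alpha)}$ of $w_\sigma^{-1} U_C w_\sigma$ sits inside the upper triangular unipotent $U$, hence $w_\sigma^{-1} B_C w_\sigma \subseteq TU = B$. For the reverse inclusion: if $w_\sigma^{-1} B_C w_\sigma \subseteq B$, then in particular for each $\alpha \in C$ we have $w_\sigma^{-1} U_\alpha w_\sigma = U_{\sigma^{-1}(\alpha)} \subseteq B$, and since a root subgroup $U_\beta$ is contained in $B$ if and only if $\beta \in R^+$ (otherwise it contains strictly lower triangular unipotent matrices), we conclude $\sigma^{-1}(\alpha) \in R^+$ for every $\alpha \in C$, i.e.\ $\sigma \in W_C$.

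There is no real obstacle: the whole argument is the explicit matrix conjugation computation plus the elementary fact that root subgroups attached to negative roots are not contained in $B$. The only thing to be slightly careful about is distinguishing the fixed matrix representative $w_\sigma$ from the class $\sigma \in W$, but Lemma's statement only involves conjugation by $w_\sigma$ (which is well-defined up to $T$, leaving the conclusion unchanged since $T \subseteq B$).
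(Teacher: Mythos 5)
The paper does not prove this lemma; it is cited directly from \cite[Lem.~2.3.6]{BH15}, where it is established for general split connected reductive groups. Your argument is a correct direct verification in the special case $H = \GL_d$, and it is the natural one: the explicit computation $w_\sigma^{-1} E_{ij} w_\sigma = E_{\sigma^{-1}(i),\sigma^{-1}(j)}$ (which follows immediately from the conjugation formula $w_\sigma^{-1}(a_{i,j})w_\sigma = (a_{\sigma(i),\sigma(j)})$ recorded in the text) yields $w_\sigma^{-1} U_\alpha w_\sigma = U_{\sigma^{-1}(\alpha)}$, and both inclusions then follow from the fact that $B_C$ is generated by $T$ together with the $U_\alpha$ for $\alpha \in C$, combined with the observation that a root subgroup lies in $B$ precisely when the root is positive. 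The one place where the phrasing could be slightly tightened is the remark that the choice of representative $w_\sigma$ does not matter: the cleanest justification is that $T$ normalizes both $B_C$ (since $T$ normalizes each $U_\alpha$) and $B$, so replacing $w_\sigma$ by $t w_\sigma$ with $t\in T$ leaves the conjugate $w_\sigma^{-1}B_C w_\sigma$ literally unchanged. But your stated reason ($T\subseteq B$ and $B$ is a group, so conjugating by $t$ preserves the inclusion into $B$) is also valid for the conclusion you need. Overall, the proposal is correct and complete, and where \cite{BH15} works abstractly with root data for a general reductive group, you gain concreteness by working with matrix entries, at the mild cost of being tied to $\GL_d$ --- which is all the paper uses.
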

The above lemma says that conjugations of a matrix in $B_C$ by permutations in $W_C$ will stay upper triangular. In the following, we will sometimes simply use $\sigma$ to mean the matrix $w_{\sigma}$. If $?$ is a ring, we will use $B_C(?)$ to mean the subring of $\Mat_d(?)$ corresponding to the algebraic group $B_C$. That is, if $C= \{ \ee_{i_1}-\ee_{j_1}, \ldots, \ee_{i_m}-\ee_{j_m} \}$ is closed in $\Rplus$, then we let
$$B_C(?) : = \{ A=(a_{i, j}) \in \Mat_d(?) : A \textnormal{ is upper triangular, and }  a_{i, j}=0 \textnormal{ if } \ee_i-\ee_j \notin C    \}.$$ It is clear that for any $A \in B_C(?)$ and any $\sigma \in W_C$, $\sigma^{-1}A \sigma$ is an upper triangular matrix.

%#############################################################
\section{Shape of upper triangular $(\varphi, \hat G)$-modules with $k_E$-coefficients} \label{section: Shape study}

In this section, we study the shape of upper triangular torsion $(\varphi, \hat G)$-modules, using results in \cite{Gao15unram}, as well as ideas in Section \ref{section: BH review}.

\subsection{Shape of $\varphi$}

\begin{prop} \label{varphishape}
With notations from (\textnormal{\textbf{CRYS}}). Suppose that $\overline \rho$ is upper triangular. Then $\barm \in \mathcal E(\barn_d, \ldots, \barn_1)$, where $\barn_i =\barm(t_i; a_i)$ for some $a_i \in k_E^{\times}$, and $\{t_1, \ldots, t_d\} =\{r_1, \ldots, r_d\}$ as sets.

Furthermore, there exists a basis $\bolde$ of $\barm$, such that the matrix $\Aphi$ of $\varphi$ with respect to this basis can be decomposed as $\Aphi =\widetilde \Aphi + u^p N$ where
\begin{enumerate}
  \item $\widetilde \Aphi$ is upper triangular, with diagonal equal to $[a_1u^{t_1}, \ldots, a_du^{t_d}]$, and $(\widetilde \Aphi)_{i, j} = u^{t_i} y_{i, j}$ for $i<j$ (here $(\widetilde \Aphi)_{i, j}$ is the element of $\widetilde \Aphi$ in the $(i, j)$-position), where
      \begin{itemize}
  \item $y_{ i, j}=0$ if $t_{j} < t_{i}$.
  \item $y_{ i, j} \in k_E$ if $t_{j}>t_{i}$.
      \end{itemize}
 \item $N \in \Mat_d(k_E[u])$ is strictly upper triangular (i.e., the diagonal is 0).
\end{enumerate}
\end{prop}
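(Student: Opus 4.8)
The statement has two parts: first, identifying $\barm$ as an iterated extension of the rank-1 modules $\barn_i = \barm(t_i;a_i)$ with $\{t_i\}=\{r_i\}$; second, producing a basis in which the Frobenius matrix has the specified ``near-upper-triangular'' normal form $A_\varphi = \widetilde A_\varphi + u^p N$. I would treat these separately.

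For the first part, the key input is the dictionary between $\barrho$ and $\barm$ together with the results of \cite{Gao15unram} on the shape of Kisin modules attached to reductions of crystalline representations. Since $\barrho$ is upper triangular with graded pieces $\barchi_{d-i}$, functoriality of $\huaM \mapsto \barm$ (more precisely, exactness properties of the anti-equivalence $\barhatm \mapsto \That$ recorded in Section~2) transports the filtration $\Fil^\bullet \barrho$ to a filtration of $\barhatm$ by sub-$(\varphi,\Ghat)$-modules whose graded pieces are the rank-1 objects $\overline{\hatm}$ attached to the characters $\barchi_i$; forgetting the $\Ghat$-structure gives the filtration of $\barm$ by Kisin submodules with rank-1 graded pieces, so $\barm \in \mathcal E(\barn_d,\ldots,\barn_1)$ with $\barn_i = \barm(t_i;a_i)$ by the classification of rank-1 Kisin modules (Lemma~1.11 of \loccit). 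That the multiset $\{t_1,\ldots,t_d\}$ equals $\{r_1,\ldots,r_d\}$ is exactly where the hypothesis $\HT(V)\subseteq[0,p]$ and the ``Fontaine–Laffaille-type'' range enters: this is the content of the shape results in \cite{Gao15unram} (the $t_i$ record the ``$\varphi$-elementary divisors'' of $\barm$, which are pinned down by the Hodge–Tate weights when those lie in $[0,p]$). I would cite \loccit for this rather than reprove it.

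For the second part — the normal form — I would argue by a descending induction on the filtration, combined with a change-of-basis cleanup. Choose any basis $\bolde = (e_1,\ldots,e_d)$ adapted to the filtration, so that $A_\varphi$ is already \emph{block} upper triangular with diagonal entries $a_i u^{t_i}$ coming from $\barn_i$. The task is then to normalize the strictly-upper-triangular entries. Write $(A_\varphi)_{i,j} = \sum_k c_{i,j,k} u^k$ for $i<j$. Because $\varphi$ has height $r \le p$ (the span of $\Im\varphi$ contains $E(u)^r\barm$ and here $E(u)=u-p$, but over $k_E$ we have $E(u)=u$, so $\det A_\varphi \doteq u^{t_1+\cdots+t_d}$ up to a unit), one gets divisibility constraints: looking at the $(i,j)$-cofactor expansion / the existence of $B$ with $A_\varphi B = u^{r}\,\mathrm{Id}$ over $\huaS_{k_E}$, one shows $u^{t_i} \mid (A_\varphi)_{i,j}$ for all $i<j$ (the row $i$ carries at least a factor $u^{t_i}$). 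So write $(A_\varphi)_{i,j} = u^{t_i} z_{i,j}(u)$. Now a unipotent upper-triangular change of basis $\bolde \mapsto \bolde \cdot P$ with $P = \mathrm{Id} + (\text{strictly upper triangular over } k_E[u])$ transforms $A_\varphi \mapsto P^{-1} A_\varphi \varphi(P)$, and since $\varphi(P) \equiv \mathrm{Id} \bmod u^p$ (Frobenius raises $u$ to $u^p$!), such a change of basis lets one \emph{freely} alter the entries $z_{i,j}$ modulo $u^{p-t_i}$ — i.e. kill all coefficients of $z_{i,j}$ in degrees $\ge 1$ \emph{except} those that would require touching degree $\ge p-t_i$, which get absorbed into the $u^p N$ term. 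Hence after this normalization $z_{i,j}\in k_E + u^p k_E[u]\cdot u^{-t_i}$; the constant term $y_{i,j} := z_{i,j}(0)$ must vanish when $t_j < t_i$ because in that case the $(i,j)$ entry $u^{t_i}y_{i,j}$ contributes to a graded piece that is, after the permutation/filtration bookkeeping, forced to be zero for degree reasons (a rank-1 sub/quotient of $u$-adic valuation $t_i$ cannot map into one of smaller valuation $t_j$ while commuting with $\varphi$) — this is the point where the ordering hypotheses $r_1<\cdots<r_d$ and the divisibility from height $\le p$ interact, and is essentially a restatement of the ``shape of $\varphi$'' computations in \cite{Gao15unram}.

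\textbf{Main obstacle.} The genuinely delicate step is controlling \emph{exactly} which coefficients survive: the interaction between the height bound $r\le p$ (giving $u^{t_i}\mid(A_\varphi)_{i,j}$ and bounding total $u$-degrees) and the Frobenius twist $\varphi(P)\equiv\mathrm{Id}\bmod u^p$ (governing which entries are removable by base change) is what produces the precise dichotomy $y_{i,j}=0$ for $t_j<t_i$ versus $y_{i,j}\in k_E$ for $t_j>t_i$, and one must check the base-change normalization can be performed compatibly across all pairs $(i,j)$ simultaneously without reintroducing previously-killed terms — this is where I expect the bookkeeping (and appeal to the structural results of \loccit) to do the real work, rather than in any single estimate.
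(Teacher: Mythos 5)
Your outline of the first paragraph (transport the filtration of $\barrho$ to a filtration of $\barhatm$ and then $\barm$, cite Gao15unram for the fact that the multiset $\{t_i\}$ equals $\{r_i\}$) is essentially what the paper does, citing \cite[Prop.~2.3]{Gao15unram} for the rank-one pieces and \cite[Prop.~2.2]{Gao15unram} for the existence of an adapted basis with upper-triangular $\Aphi$.

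The normal-form argument, however, diverges from the paper's in two essential ways, and I think you have a genuine gap. First, the decomposition $\Aphi=\widetilde{\Aphi}+u^pN$ is \emph{not} achieved by a unipotent change-of-basis cleanup. The paper's mechanism is structural: \cite[Prop.~2.2]{Gao15unram} already produces a basis in which $\Aphi$ is upper triangular, but possibly with some ``extra'' off-diagonal terms that cannot be removed; these extra terms occur precisely when there is a nonzero morphism $\barn_j\to\barn_i$ for $j>i$, and such a term has $u$-degree $t_j+\frac{t_j-t_i}{p-1}$. In the $\Qp$ case with Hodge--Tate weights in $[0,p]$, the only way to have such a morphism is $t_j-t_i=p-1$ and $a_i=a_j$, which forces the degree to be $p$ or $p+1$, i.e., the extra terms are automatically divisible by $u^p$. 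That observation \emph{is} the decomposition: set $u^pN$ to be exactly those extra terms. Your proposed base-change argument with $\varphi(P)\equiv\mathrm{Id}\bmod u^p$ does not obviously give this; in a rank-2 computation the $(1,2)$-entry transforms as $u^{t_1}z\mapsto u^{t_1}z-p_{12}u^{t_2}+u^{t_1}p_{12}(u^p)$, so when $t_2>t_1$ you can only shift $z$ by multiples of $u^{t_2-t_1}$, which does not ``freely'' kill all positive-degree coefficients below degree $p-t_1$.

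Second, and more importantly, the constraint that the surviving coefficient $y_{i,j}$ is a constant, and that it vanishes when $t_j<t_i$, does \emph{not} come from the Kisin-module height condition alone, contrary to what you assert. The input is genuinely crystalline: the paper invokes \cite[Thm.~2.1]{Gao15unram} to produce a second basis $\bolde'$ of $\barm$ with $\varphi(\bolde')=\bolde'X[u^{r_1},\ldots,u^{r_d}]$ and $X\in\Mat_d(k_E\llb u\rrb)$ --- this $X$ is the reduction of the matrix coming from a strongly divisible lattice in the crystalline representation, and it is the source of the divisibility constraint. Writing $\bolde'=\bolde T$, splitting $\varphi(T)=P+u^pQ$ with $P\in\Mat_d(k_E)$, and conjugating by the permutation matrix $R$ gives $u^{t_i}\mid\col_i(\widetilde{\Aphi}PR)$, and then \cite[Lem.~4.3]{Gao15unram} (a combinatorial lemma about matrices with the property ``(DEG)'') converts this column divisibility into the asserted row-structure of $\widetilde{\Aphi}$. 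Your proposal tries to substitute the mere existence of $B$ with $A_\varphi B=u^r\mathrm{Id}$; this is far weaker and does not force $y_{i,j}=0$ when $t_j<t_i$. You flag the bookkeeping as the hard part and punt to ``the structural results of \loccit,'' but the missing ingredient is specifically the comparison with the honest crystalline lattice, not extra bookkeeping over the torsion Kisin module by itself.
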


\begin{proof}
This is a slight generalization of \cite[Prop. 4.1]{Gao15unram} (using, in particular, \cite[Prop. 2.2, Prop. 2.3]{Gao15unram}). The novelty here is that we can allow the existence of nonzero morphisms $\barn_j \to \barn_i$ for some $j>i$, i.e., the situation in Statement (3) of \cite[Prop. 2.2]{Gao15unram} is allowed.

\textbf{Step 1}.
First of all, the existence and shape of $\barn_i$ is proved in \cite[Prop. 2.3]{Gao15unram}.
To construct the basis $\bolde$ and the upper triangular matrix $\Aphi$, we will apply \cite[Prop. 2.2]{Gao15unram}. For the convenience of the reader, let us give some more explanation of \textit{loc. cit.}, in the $\Qp$-case (the general unramified case is similar).
\footnote{The statement of \cite[Prop. 2.2]{Gao15unram} is correct. A minor imperfection is that in the proof of \textit{loc. cit.}, we cited \cite[Prop. 7.4]{GLS14}. Indeed, to be more precise, we should have cited  \cite[Prop. 5.1.3]{GLS15} instead (although as mentioned in \cite[Prop. 5.1.3]{GLS15}, their proof are almost identical). The difference between \cite[Prop. 7.4]{GLS14} and \cite[Prop. 5.1.3]{GLS15} is that in the latter situation, we can allow all the Hodge-Tate numbers to be nonzero (we thank one of the referees for pointing this out).}

The construction of the basis $\bolde$ (in \cite[Prop. 2.2]{Gao15unram}) when $d>2$ is really an easy inductive process from that of \cite[Prop. 5.1.3]{GLS15} (where $d=2$). Let us only sketch the case when $d=3$. That is, suppose now we have $\barm \in \mathcal E(\barn_3, \barn_2, \barn_1)$. So we have a basis $\{f_1, f_2, f_3\}$ such that
$$\varphi(f_1, f_2, f_3) = (f_1, f_2, f_3)
\begin{pmatrix}
a_1u^{t_1} & x & z \\
0 & a_2u^{t_2} & y \\
0 & 0 &  a_3u^{t_3}
\end{pmatrix}$$
The key point then is to make change of bases so that $x, y, z$ will satisfy the conditions in \cite[Prop. 2.2]{Gao15unram}. By \cite[Prop. 5.1.3]{GLS15} (the $d=2$ case), we can and do assume that $x$ already satisfies all the conditions in \cite[Prop. 2.2]{Gao15unram}. That is: $x$ is a polynomial in $k_E[u]$ of degree less than $t_2$, unless if there exists nonzero morphism $\barn_2 \to \barn_1$, then $x$ can have an extra term of degree $t_2+\frac{t_2-t_1}{p-1}$.

The next step is to alter $f_3$ in order to make $y, z$ satisfy \cite[Prop. 2.2]{Gao15unram}. We can first change $f_3$ to $f_3' =f_3 + \alpha f_2$ as in the proof of \cite[Prop. 5.1.3]{GLS15} to make $y$ to some $y'$ that satisfy \cite[Prop. 2.2]{Gao15unram}. Note that this process will not have any effect on $x$, but it will alter $z$.
So now we are in the situation
$$\varphi(f_1, f_2, f_3') = (f_1, f_2, f_3')
\begin{pmatrix}
a_1u^{t_1} & x & z' \\
0 & a_2u^{t_2} & y' \\
0 & 0 & 0 a_3u^{t_3}
\end{pmatrix}$$
where both $x, y'$ satisfy \cite[Prop. 2.2]{Gao15unram}. Now we only need to change $f_3'$ to some $f_3''=f_3'+\beta f_1$ in order to make $z'$ satisfy \cite[Prop. 2.2]{Gao15unram}. Note that there is no extension between $\barn_3$ and $\barn_1$, so we can not directly apply \cite[Prop. 5.1.3]{GLS15} to get $f_3''$. However, the ``$f_2$-parts" of $\varphi(f_3'') =\varphi(f_3') +\varphi(\beta)a_1u^{t_1} f_1$ and $\varphi(f_3')$ are the same. So we can ``forget" about $f_2$ and pretend that there is an extension between $\barn_3$ and $\barn_1$. The same process as in \cite[Prop. 5.1.3]{GLS15} will in the end produce our desired basis $\bolde$.

\textbf{Step 2}.
Now let us discuss about the ``extra terms".
Recall that in \cite[Prop. 2.2]{Gao15unram}, when there exists nonzero morphisms $\barn_j \to \barn_i$ for some $j>i$, then $\Aphi$ can have \emph{extra} terms as described in Statement (3) of \textit{loc. cit.}, and this extra term has degree $t_j+\frac{t_j -t_i}{p-1}$. Note that in order to have $\barn_j \to \barn_i$ for $j>i$, the only possibility is to have $t_j-t_i =p-1$ and $a_i=a_j$ (easy by \cite[Lem. 1.13]{Gao15unram} since we are in the $\Qp$ situation). So the extra terms are always of degree $p$ or $p+1$, i.e., the extra terms are always divisible by $u^p$. (In fact, clearly we can only have at most two extra terms). Decompose $\Aphi$ as $\widetilde \Aphi + u^p N$ where $u^pN$ are the extra terms.

\textbf{Step 3}.
Finally, we only need to prove the properties regarding $y_{i, j}$.
We argue similarly as in \cite[Prop. 2.3]{Gao15unram}, let $\bolde'$ be another basis of $\barm$ such that $\varphi(\bolde') = \bolde' X [u^{r_1}, \ldots, u^{r_d}]$ where $X \in \Mat_d(k_E\llb u\rrb)$ as in \cite[Thm. 2.1]{Gao15unram}.
Let $\bolde'=\bolde T$ for some matrix $T \in \GL_d(k_E\llb u\rrb)$, then $\Aphi = TX [u^{r_1}, \ldots, u^{r_d}] \varphi(T^{-1})$.
Similarly as in \cite[Prop. 4.1]{Gao15unram}, let $\varphi(T)=P+u^pQ$ for some $P \in \GL_d(k_E), Q \in \Mat_d(k_E\llb u\rrb)$, and let $R \in \GL_d(k_E)$ such that $R^{-1} [u^{r_1}, \ldots, u^{r_d}] R = [u^{t_1}, \ldots, u^{t_d}]$, then we have
$$(\widetilde \Aphi + u^p N)(P+u^pQ)R = TXR[u^{t_1}, \ldots, u^{t_d}].$$
So we have $u^{t_i} \mid \col_i(\widetilde{\Aphi} PR) $.
Now we can again apply \cite[Lem. 4.3]{Gao15unram} to conclude (note that $\widetilde{\Aphi}$ satisfies property (DEG) of \textit{loc. cit.}, since we removed the extra terms $u^pN$ from $\Aphi$).
\end{proof}

With notations in Proposition \ref{varphishape}, we can define the following subset $C$ of $R^+$:
\numequation \label{BC}
  C :=\{\ee_i -\ee_j: i<j, t_i<t_j\}.
  \end{equation}
It is easy to see that $C$ is closed in $R^+$, and $\widetilde \Aphi$ is a matrix in the subring $B_C(k_E[u])$. But in fact, we also have $\Aphi \in B_C(k_E[u])$, because the extra terms in $u^pN$ only show up in positions $(i, j)$ where $t_i<t_j$.

\begin{prop} \label{varphiBC}
There exists a unique $\sigma \in W_C$ such that $\sigma^{-1} \Aphi \sigma$ is still upper triangular, and $\diag(\sigma^{-1} \Aphi \sigma) = [a_{\sigma(1)}u^{r_1}, \ldots, a_{\sigma(d)}u^{r_d}]$.
\end{prop}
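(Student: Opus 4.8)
The plan is to take $\sigma$ to be the unique permutation sorting the exponents $t_i$ into increasing order; everything else is then a formal check.

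First I would note that the $t_i$ are pairwise distinct: by Proposition \ref{varphishape} we have $\{t_1,\dots,t_d\}=\{r_1,\dots,r_d\}$ as sets, and $0=r_1<\dots<r_d$. Hence there is a unique $\sigma\in W=S_d$ with $t_{\sigma(1)}<t_{\sigma(2)}<\dots<t_{\sigma(d)}$, i.e.\ with $t_{\sigma(i)}=r_i$ for every $i$. Since conjugation by the permutation matrix $w_\sigma$ merely permutes diagonal entries (the identity $\sigma^{-1}(a_{i,j})\sigma=(a_{\sigma(i),\sigma(j)})$ from Section 3 gives $(\sigma^{-1}\Aphi\sigma)_{i,i}=(\Aphi)_{\sigma(i),\sigma(i)}$), the $(i,i)$-entry of $\sigma^{-1}\Aphi\sigma$ equals $a_{\sigma(i)}u^{t_{\sigma(i)}}=a_{\sigma(i)}u^{r_i}$, which is exactly the prescribed diagonal. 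This computation also yields uniqueness in the statement: any $\sigma'\in W_C$ realizing the prescribed diagonal must satisfy $a_{\sigma'(i)}u^{t_{\sigma'(i)}}=a_{\sigma'(i)}u^{r_i}$, forcing $t_{\sigma'(i)}=r_i$ for all $i$ and hence $\sigma'=\sigma$.

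It remains to check that this $\sigma$ lies in $W_C$, i.e.\ $\sigma^{-1}(C)\subseteq R^+$, and that $\sigma^{-1}\Aphi\sigma$ is upper triangular. For the first point, take $\ee_i-\ee_j\in C$, so $i<j$ and $t_i<t_j$; writing $i=\sigma(a)$, $j=\sigma(b)$ and using $t_{\sigma(k)}=r_k$ with $r_1<\dots<r_d$, the inequality $t_i<t_j$ says $r_a<r_b$, i.e.\ $a<b$, i.e.\ $\sigma^{-1}(i)<\sigma^{-1}(j)$; therefore $\sigma^{-1}(\ee_i-\ee_j)=\ee_{\sigma^{-1}(i)}-\ee_{\sigma^{-1}(j)}\in R^+$, so $\sigma\in W_C$. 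For the second point, recall (as noted just above, using Proposition \ref{varphishape} and the definition \eqref{BC}) that $\Aphi\in B_C(k_E[u])$; since $\sigma\in W_C$, Lemma \ref{LemmaBH} gives $w_\sigma^{-1}B_C w_\sigma\subseteq B$, hence $\sigma^{-1}\Aphi\sigma\in B(k_E[u])$ is upper triangular.

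I do not expect a genuine obstacle here: the content is simply that a sequence of distinct integers has a unique increasing rearrangement, and that the associated permutation automatically lies in $W_C$ by the definition \eqref{BC} of $C$ as the set of inversions of $(t_1,\dots,t_d)$. The only place to be careful is matching the conventions of Section 3 for the $W$-action on $X(T)$ and for conjugation of matrices by the $w_\sigma$.
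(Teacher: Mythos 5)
Your proof is correct and takes essentially the same route as the paper: define $\sigma$ to be the unique sorting permutation with $t_{\sigma(i)}=r_i$, and then verify $\sigma\in W_C$ directly from the definition \eqref{BC} of $C$ as the inversion set of $(t_1,\dots,t_d)$. You spell out a few steps that the paper leaves implicit (distinctness of the $t_i$, the diagonal computation under conjugation, and the explicit appeal to Lemma \ref{LemmaBH} to conclude upper triangularity), but the substance is identical.
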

\begin{proof}
The uniqueness of $\sigma$ is determined since we have $t_{\sigma(i)}=r_i, \forall i$, that is,
\numequation \label{order}
  t_{\sigma(1)}< \ldots < t_{\sigma(d)}.
  \end{equation}
It suffices to show that $\sigma \in W_C$ ($\Leftrightarrow \sigma^{-1}(W_C) \subseteq R^{+}$), i.e., if $\ee_i -\ee_j \in C$, then $\sigma^{-1}(i) <\sigma^{-1}(j)$.
Let $x = \sigma^{-1}(i)$ and $\sigma^{-1} (j) =y$.
Then $t_i=t_{\sigma(x)}< t_{\sigma(j)} =t_j$.
So by \eqref{order}, we must have $x<y$.
\end{proof}

\begin{remark}The following remark is suggested by one of the referees.
Since we have already shown that $(A_{\varphi})_{i, j}=0$ when $t_j < t_i$ (where $j>i$), we could use an elementary ``swapping" process to obtain the above Proposition. Namely, suppose for example $t_{i+1}<t_i$, we could simply change the basis $(e_1, \ldots, e_i, e_{i+1}, \ldots, e_d)$ to $(e_1, \ldots, e_{i+1}, e_{i}, \ldots, e_d)$; the matrix for $\varphi$ will remain upper triangular. After all these possible two by two swappings, the $u$-power on the diagonal will become eventually increasing.

As the readers can see, this elementary swapping process is precisely the key idea in the Breuil-Herzig group theory that we reviewed in Section \ref{section: BH review}. Indeed, the ``ordinary part" of the $p$-adic Langlands in \cite{BH15} is precisely built out from $\GL_2$! It is also interesting to point out in the paper \cite{GS16}, a similar Weyl group element played a similar useful role in determining the locally algebraic vectors in the ``ordinary part" of \cite{BH15} (see the remarks following \cite[Thm 1.2]{GS16}).

We have chosen to keep the Breuil-Herzig theory in our paper (instead of the more elementary swapping process), because it does make the argument cleaner. Also, as we mentioned in the Introduction, this indeed provides a natural example of the Breuil-Herzig group theory.
\end{remark}

\subsection{Shape of $\tau$}

Our following lemma (Lemma \ref{zerosolution}) is valid for any $K/\Qp$. So we use notations introduced in Section \ref{section intro}. Recall that $u = (\pi_{n})_{n=0}^{\infty} \in R$, and we normalize the valuation on $R$ so that $v_R(u)=\frac{1}{e}$ where $e$ is the ramification degree of $K/\Qp$.
For $\zeta \in R\otimes_{\Fp}k_E$, write it as $\zeta= \sum_{i=1}^m y_i\otimes a_i$ where $y_i \in R$, and $a_i \in k_E$ are independent over $\Fp$. Let $$v_R(\zeta):=\textnormal{min}\{v_R(y_i)\}.$$
Then by \cite[Lem. 5.6]{Gao15unram}, $v_R$ is a well-defined valuation on $R\otimes_{\Fp}k_E $ (so in particular, it does not depend on the sum representing $\zeta$). In particular, $v_R(\varphi(\zeta))=pv_R(\zeta)$. We also use the convention that $v_R(0)=+\infty$.

\begin{lemma} \label{zerosolution}
Let $\zeta \in R\otimes_{\Fp}k_E$ with $v_R(\zeta)>0$ , such that
$$\zeta \tau(\varphi(u^b))  = \varphi(u^a) \varphi(\zeta)$$
 for some $a>b\geq 0$, then $\zeta=0$.
\end{lemma}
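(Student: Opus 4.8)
The plan is to use the valuation $v_R$ on $R\otimes_{\Fp}k_E$ to derive a contradiction unless $\zeta=0$. The key observation is that the functional equation $\zeta\,\tau(\varphi(u^b)) = \varphi(u^a)\,\varphi(\zeta)$ relates $v_R(\zeta)$ to $v_R(\varphi(\zeta)) = p\,v_R(\zeta)$, and the discrepancy between the two sides is controlled by $v_R(u^a) - v_R(\tau(\varphi(u^b)))$. First I would compute the relevant valuations. We have $v_R(u)=\frac1e$, so $v_R(u^a)=a/e$. For the term $\tau(\varphi(u^b))$: note $\varphi(u^b)=u^{pb}$, and $\tau(u)=u[\underline\epsilon]$ (since $\mu_{p^n}=\tau(\pi_n)/\pi_n$), hence $\tau(\varphi(u^b)) = u^{pb}[\underline\epsilon]^{pb}$. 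Since $[\underline\epsilon]$ is a unit in $W(R)$ with $v_R$-valuation... actually $[\underline\epsilon]-1$ has positive valuation but $[\underline\epsilon]$ itself is a unit, so $v_R(\tau(\varphi(u^b))) = v_R(u^{pb}) = pb/e$. (I should double-check that $\tau(\varphi(u^b))$ is genuinely nonzero in $R\otimes k_E$, or rather work in the appropriate ring where this makes sense; the point is only its valuation.)

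Now take $v_R$ of both sides of the equation. Assuming $\zeta\neq 0$, so $v_R(\zeta)$ is finite and positive, the left side has valuation $v_R(\zeta) + pb/e$, and the right side has valuation $a/e + p\,v_R(\zeta)$. Equating:
\[
v_R(\zeta) + \frac{pb}{e} = \frac{a}{e} + p\,v_R(\zeta),
\]
so $(p-1)\,v_R(\zeta) = \frac{a-pb}{e}$, i.e. $v_R(\zeta) = \frac{a-pb}{e(p-1)}$. Since $v_R(\zeta)>0$ this forces $a > pb$. But wait — this is consistent with the hypothesis $a>b\geq 0$ in some cases, so valuation alone is not immediately a contradiction; I need to iterate. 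The idea is a bootstrapping argument: rewrite the equation as $\zeta = u^{a-pb}[\underline\epsilon]^{-pb}\varphi(\zeta)$ (up to sign/units), so $\zeta$ is divisible by $\varphi(\zeta)$ times a positive power of $u$; substituting $\zeta = u^{a-pb}(\cdots)\varphi(\zeta)$ into itself and applying $\varphi$ repeatedly shows $v_R(\zeta) \geq$ an ever-growing quantity. More precisely, from $v_R(\zeta) = (a-pb)/(e(p-1)) + \text{(something)}$, and re-substituting, one sees $\zeta$ is divisible by arbitrarily high powers of $u$ in the completed ring, forcing $\zeta=0$.

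The main obstacle — and the step to be careful about — is making the bootstrapping rigorous: one must argue within a ring (likely $W(R)\otimes_{\Zp}\mathcal O_E$ or its image, where $u$-adic or $v_R$-adic separatedness holds) that "$v_R(\zeta)$ finite but $\zeta$ divisible by unboundedly high powers of $u$" genuinely yields $\zeta=0$. This requires knowing that $R\otimes_{\Fp}k_E$, with the valuation $v_R$ extended as in \cite[Lem. 5.6]{Gao15unram}, has no nonzero elements of infinite valuation, which should follow from $R$ being a valuation ring and $k_E/\Fp$ finite. A cleaner packaging: from $v_R(\zeta)=\frac{a-pb}{e(p-1)}$ exactly, feed this back through $v_R(\zeta) = \frac{a}{e} - \frac{pb}{e} + p\,v_R(\zeta)$... actually the equation already pins down $v_R(\zeta)$ uniquely, so the real contradiction must come from examining the structure more closely — perhaps that $\tau(\varphi(\zeta))$ vs $\varphi(\tau(\zeta))$ forces a compatibility, or that iterating $\zeta \mapsto \varphi(\zeta)$ in the equation and comparing gives $v_R(\zeta) = \frac{(a-pb)}{e(p-1)}$ AND, after $n$ iterations, a formula whose limit is $+\infty$ unless $a \le pb$, contradicting $a>pb$. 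I would write out the two-step iteration explicitly, confirm the valuation grows, and conclude $\zeta=0$; if the naive iteration stabilizes rather than growing, I would instead exploit that $\zeta \in R\otimes k_E$ (not its fraction field) together with $a>b$ to rule out the fractional valuation $\frac{a-pb}{e(p-1)}$ directly when it is not an admissible value, or note $v_R(\zeta) \in \frac1e\Z_{\geq 0}$-type constraints fail.
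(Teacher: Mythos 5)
You have the right idea---take $v_R$ of both sides and derive a contradiction with $v_R(\zeta) > 0$---and this is exactly the paper's approach. But you made a computational slip that turned a one-line argument into a dead end. The right-hand side of the functional equation is $\varphi(u^a)\varphi(\zeta)$, not $u^a\varphi(\zeta)$: since $\varphi$ on $R$ is the $p$-th power map, $v_R(\varphi(u^a)) = pa/e$, not $a/e$. You correctly applied $\varphi$ to $u^b$ on the left to get $u^{pb}$, but then silently dropped the $\varphi$ on $u^a$ on the right.

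With the correct exponent the valuation identity reads
$$v_R(\zeta) + \frac{pb}{e} = \frac{pa}{e} + p\,v_R(\zeta),$$
which gives $(p-1)\,v_R(\zeta) = \frac{p(b-a)}{e} < 0$ since $a > b$. This is immediately incompatible with $v_R(\zeta) > 0$, so $v_R(\zeta)$ must be $+\infty$, i.e.\ $\zeta = 0$. No bootstrapping, no iteration, and no appeal to $v_R$-adic separatedness of $R\otimes_{\Fp}k_E$ is needed. The entire second half of your proposal---the observation that your equation ``pins down $v_R(\zeta)$ uniquely'' with no contradiction, and the subsequent casting about for an iteration or a divisibility constraint---is a symptom of the missing factor of $p$. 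Once that is fixed the proof collapses to the paper's single sentence.
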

\begin{proof}
Note that $\tau(u)=u\underline \epsilon$, where $\underline \epsilon=(\mu_{p^n})_{n=0}^{\infty} \in R$ . Consider the valuation on both side of the equation, then $v_R(\zeta) +\frac{pb}{e}=\frac{pa}{e} +pv_R(\zeta)$. The only possibility is when $v_R(\zeta)=+\infty$.
\end{proof}

Now we return to the $\Qp$ case.
\begin{prop} \label{tauBC}
With notations in Proposition \ref{varphishape}, let $\Atau \in \Mat(R\otimes_{\Fp}k_E)$ is the matrix of $\tau$ with respect to the basis $1\otimes_{\varphi}\bolde$. Then $\Atau$ is in the subring $B_C(R\otimes_{\Fp}k_E)$ defined by \eqref{BC}, i.e., if $i<j$ and $t_i>t_j$, then $(\Atau)_{i, j}=0$.
\end{prop}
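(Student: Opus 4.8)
The statement asserts that the matrix $\Atau$ of the $\tau$-action (on $\mhat = \Rhat_{k_E}\otimes_{\varphi,\huaS_{k_E}}\barm$ in the basis $1\otimes_\varphi\bolde$) has a vanishing $(i,j)$-entry whenever $i<j$ and $t_i>t_j$; that is, $\Atau$ lies in $B_C(R\otimes_{\Fp}k_E)$ for the same closed set $C$ of \eqref{BC} that already contains $\Aphi$. The basic tool is the compatibility relation from Lemma~\ref{pnot2}(2), namely
\numequation \label{plancompat}
\Atau\,\tau(\varphi(\Aphi)) = \varphi(\Aphi)\,\varphi(\Atau),
\end{equation}
together with the fact $\Atau - \Id \in \Mat(I_+\Rhat_{k_E})$, which in particular forces $v_R((\Atau)_{i,j})>0$ for all off-diagonal entries and $v_R((\Atau)_{i,i}-1)>0$. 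The first thing I would do is record that $\varphi(\Aphi)$ is upper triangular with diagonal $[a_1 u^{pt_1},\dots,a_d u^{pt_d}]$ (applying $\varphi$ to the shape in Proposition~\ref{varphishape}, using $\varphi(u^p N)$ divisible by $u^{p^2}$, etc.), so $\varphi(\Aphi)$ also lies in $B_C$; and similarly that $\tau(\varphi(\Aphi))$ is obtained from $\varphi(\Aphi)$ by applying $\tau$ entrywise, which replaces $u$ by $u\underline\epsilon$ and hence does not change valuations.

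\textbf{Main argument.} I would prove the vanishing by a downward induction on $j-i$ among the "bad" pairs, or more cleanly, by extracting the $(i,j)$-entry of \eqref{plancompat} and isolating the term one wants to kill. Fix $i<j$ with $t_i>t_j$. Comparing $(i,j)$-entries of $\Atau\tau(\varphi(\Aphi))$ and $\varphi(\Aphi)\varphi(\Atau)$, the "diagonal-of-$\tau(\varphi(\Aphi))$ times $(\Atau)_{i,j}$" term on the left is $(\Atau)_{i,j}\cdot a_j\tau(\varphi(u^{pt_j})) = (\Atau)_{i,j}\cdot a_j\varphi(u^{t_j})^{p}$ up to the $\underline\epsilon$-twist, and the "$(i,i)$-diagonal-of-$\varphi(\Aphi)$ times $\varphi(\Atau)_{i,j}$" term on the right is $a_i\varphi(u^{t_i})\varphi((\Atau)_{i,j})$; all the other contributions to these two entries involve either off-diagonal entries of $\Aphi$ (which carry the prefactor $u^{t_k}$ with the appropriate index, forcing them into $C$) or off-diagonal entries of $\Atau$ with strictly smaller $j-i$ that have already been shown to vanish by induction, or entries $(\Atau)_{k,l}$ with $\ee_k-\ee_l\in C$ paired against $\Aphi$-entries — in all of these the relevant $\Atau$-entry is known to be zero or the term lands in positions already controlled. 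After cancelling, one is left with an equation of the form
\numequation \label{planreduced}
(\Atau)_{i,j}\,\tau(\varphi(u^{t_j})) = \varphi(u^{t_i})\,\varphi((\Atau)_{i,j})
\end{equation}
(possibly after dividing through by the common unit $a_i=a_j$ in the degenerate case, but generically $a_i\neq a_j$ and one must keep both scalars — this needs a small check). Since $t_i>t_j\ge 0$ and $v_R((\Atau)_{i,j})>0$, this is exactly the hypothesis of Lemma~\ref{zerosolution} with $a=t_i$, $b=t_j$, $\zeta=(\Atau)_{i,j}$ — note $\tau(\varphi(u^{t_j})) = \tau(\varphi(u))^{t_j}$ has the same $v_R$ as $\varphi(u^{t_j})$ because $v_R(\underline\epsilon)=0$, so the valuation bookkeeping in that lemma goes through verbatim — hence $(\Atau)_{i,j}=0$, completing the induction.

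\textbf{Main obstacle.} The delicate point is the bookkeeping in the previous paragraph: one must be sure that \emph{every} term other than the two displayed in \eqref{planreduced} either vanishes by the inductive hypothesis or, crucially, involves an $\Atau$-entry $(\Atau)_{k,l}$ with $\ee_k-\ee_l$ \emph{not} in $C$ but with $k>i$ or $l<j$ so that it was already handled — i.e., one must order the "bad" pairs correctly so the induction closes, and verify that the product structure of $B_C$ (closedness of $C$, Lemma~\ref{LemmaBH}/the matrix-multiplication formulas in Section~3) guarantees that the good entries of $\Aphi$ never conspire with not-yet-killed entries of $\Atau$ to produce a surviving contribution in a bad position. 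A clean way to package this is: first show by the same Lemma~\ref{zerosolution} argument that $\Atau$ cannot have nonzero entries "strictly outside $B$" is automatic (it is upper triangular to begin with, or this follows from a separate valuation count on the strictly-lower entries), then run the induction on bad pairs $(i,j)$ within the upper triangle ordered by increasing $j-i$; at each stage the right side of \eqref{plancompat} in position $(i,j)$ only sees $\varphi(\Atau)_{i,j}$ against the diagonal and $\varphi(\Atau)_{k,l}$ for pairs with $i\le k<l\le j$, which are either on the diagonal, in $C$, or bad-with-smaller-gap. Once this combinatorial check is in place, the analytic core is entirely Lemma~\ref{zerosolution} and is immediate.
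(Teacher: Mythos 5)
Your proof is correct and follows essentially the same route as the paper: extract the $(i,j)$-entry of the compatibility relation $\Atau\,\tau(\varphi(\Aphi))=\varphi(\Aphi)\,\varphi(\Atau)$, argue that every cross term dies (either because the $\Aphi$-entry lies outside $C$ or because the corresponding $\Atau$-entry is already known to vanish), and kill the surviving corner term with the valuation Lemma~\ref{zerosolution}. The only organizational difference is the induction: you induct on the gap $j-i$ within a fixed matrix, whereas the paper states a standalone lemma and inducts on the ambient dimension $d$, applying the hypothesis to the two $(d-1)\times(d-1)$ co-matrices obtained by deleting the first (resp.\ last) row and column, which at each stage leaves only the $(1,d)$ corner entry to handle directly --- the resulting term-by-term bookkeeping is the same. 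One small point in your favor: you flag the unit $a_i/a_j$ that survives when both diagonal entries are kept, which the paper's displayed equation quietly drops; Lemma~\ref{zerosolution} as stated carries no such scalar, but its valuation argument is insensitive to multiplication by a unit of $k_E^\times$, so the conclusion holds in either formulation.
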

\begin{proof}
This is easy consequence of the following lemma. Note that for any $i<j$, $v_R((\Atau)_{i, j})>0$ by \cite[Lem. 5.7]{Gao15unram}.
\end{proof}

\begin{lemma}
Let $F=(f_{i, j}) \in \Mat_d(k_E\llb u \rrb), M=(m_{i,j }) \in \Mat_d(R\otimes_{\Fp}k_E)$ two upper triangular matrices. Suppose $\diag(F) =[a_1u^{t_1}, \ldots, a_du^{t_d}]$ where $a_i \in k_E^{\times}$ and $t_i$ are \emph{distinct} non-negative integers. Suppose that
\begin{itemize}
  \item If $i<j$ and $t_i>t_j$, then $f_{i, j}=0$,
  \item $v_R(m_{i,j})>0, \forall i<j$, and
  \item $M \tau(\varphi(F)) = \varphi(F)\varphi(M)$.
\end{itemize}
Then $M \in B_C(R\otimes_{\Fp}k_E)$, where $C :=\{\ee_i -\ee_j: i<j, t_i<t_j\}$ the closed subset of $R^{+}$.
\end{lemma}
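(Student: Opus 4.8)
The plan is to induct on the difference $j-i$ for entries $(i,j)$ with $i<j$, proving that $m_{i,j}=0$ whenever $t_i>t_j$, which is exactly the statement that $M\in B_C(R\otimes_{\Fp}k_E)$. The engine is the single matrix identity $M\tau(\varphi(F))=\varphi(F)\varphi(M)$, read entry by entry. Write $F=(f_{i,j})$, $M=(m_{i,j})$. Since all three of $F$, $M$ and the identity are upper triangular, the $(i,j)$-entry of $M\tau(\varphi(F))$ is $\sum_{i\le k\le j} m_{i,k}\,\tau(\varphi(f_{k,j}))$, and similarly the $(i,j)$-entry of $\varphi(F)\varphi(M)$ is $\sum_{i\le k\le j}\varphi(f_{i,k})\,\varphi(m_{k,j})$. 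Isolating the two "diagonal-neighbour" terms $k=j$ on the left and $k=i$ on the right gives
\begin{equation*}
m_{i,j}\,\tau(\varphi(f_{j,j})) - \varphi(f_{i,i})\,\varphi(m_{i,j}) = \sum_{i<k<j}\bigl(\varphi(f_{i,k})\varphi(m_{k,j}) - m_{i,k}\tau(\varphi(f_{k,j}))\bigr),
\end{equation*}
i.e. $m_{i,j}\,\tau(\varphi(a_ju^{t_j})) - \varphi(a_iu^{t_i})\,\varphi(m_{i,j})$ equals a sum over strictly intermediate indices.

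First I would set up the induction: for $j=i+1$ the right-hand sum is empty, so the relation reads $m_{i,i+1}\tau(\varphi(a_{i+1}u^{t_{i+1}})) = \varphi(a_iu^{t_i})\varphi(m_{i,i+1})$; after absorbing the units $a_i,a_{i+1}$ (which are $\varphi$- and $\tau$-fixed up to units, or simply rescaling), this is precisely the hypothesis of Lemma \ref{zerosolution} with $a=t_i$, $b=t_{i+1}$, provided $t_i>t_{i+1}$; since $v_R(m_{i,i+1})>0$, the lemma forces $m_{i,i+1}=0$. For the inductive step, fix $i<j$ with $t_i>t_j$ and assume the claim for all pairs of smaller index difference. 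In the right-hand sum, consider a term with $i<k<j$: if $m_{i,k}\ne 0$ then by induction we must have $t_i<t_k$ (else it vanishes), and if $\varphi(m_{k,j})\ne 0$ then similarly $t_k<t_j$; but $t_i>t_j$ makes $t_i<t_k<t_j$ impossible for either factor to survive simultaneously — more carefully, I would show each individual product $\varphi(f_{i,k})\varphi(m_{k,j})$ and $m_{i,k}\tau(\varphi(f_{k,j}))$ either vanishes or has $v_R$ strictly larger than one would naively expect, so that the whole right-hand side has $v_R$ bounded below by something. Then a valuation computation on the left-hand side — using $v_R(\varphi(\zeta))=pv_R(\zeta)$, $v_R(\tau(u))=v_R(u)$, and $t_i>t_j$ — shows that if $m_{i,j}\ne0$ the left side has $v_R$ equal to $\min\{v_R(m_{i,j})+pt_j,\ pt_i+pv_R(m_{i,j})\}=v_R(m_{i,j})+pt_j$ (strictly smaller than the other term because $t_i>t_j$), which cannot match the lower bound on the right side, forcing $m_{i,j}=0$.

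The main obstacle I anticipate is the bookkeeping on the right-hand side: I need to argue that every intermediate term either is genuinely zero or has $v_R$ strictly exceeding $v_R(m_{i,j})+pt_j$ for the $m_{i,j}$ I am trying to kill — but of course at the stage of the induction where I am handling the pair $(i,j)$ I do not yet know $m_{i,j}$, so the cleanest route is probably to run the argument by contradiction: assume $m_{i,j}\ne0$, take $v_R$ of both sides, and derive the impossibility $v_R(m_{i,j})+pt_j \ge (\text{strict lower bound from the right})$. To make the right-hand bound work I will need the facts $v_R(f_{k,j})\ge t_k$ when nonzero (from $\diag F$ and the vanishing hypothesis on $F$, since $f_{k,j}$ with $t_k>t_j$ is zero and otherwise $u^{t_k}\mid$ the relevant column by the $\varphi$-shape, cf. Proposition \ref{varphishape}) and the already-established cases $t_i<t_k$ or $t_k<t_j$ from induction; combining $t_k<t_j<t_i$ or $t_i<t_k$, either way the exponent count beats $t_j$. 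Once that inequality is in hand the contradiction is immediate and the induction closes, giving $M\in B_C(R\otimes_{\Fp}k_E)$ as claimed. This lemma then feeds directly into Proposition \ref{tauBC} via the remark that $v_R((A_\tau)_{i,j})>0$ for $i<j$ by \cite[Lem. 5.7]{Gao15unram}.
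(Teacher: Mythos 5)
Your plan is correct and lands on the same two engines as the paper's proof: show that all the intermediate terms in the $(i,j)$-entry of $M\tau(\varphi(F))=\varphi(F)\varphi(M)$ vanish, then invoke Lemma~\ref{zerosolution} to kill $m_{i,j}$. The paper organizes the induction on the dimension $d$ (deleting the first row and column, and separately the last row and column, then handling $m_{1,d}$ by hand), while you induct on the superdiagonal distance $j-i$; these bookkeeping schemes are interchangeable and lead to the identical key computation.

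Where you over-complicate is the right-hand side. You hedge that each intermediate term ``either vanishes or has $v_R$ strictly larger than one would naively expect,'' and then sketch a valuation-comparison argument requiring auxiliary facts such as $u^{t_k}\mid f_{k,j}$ imported from Proposition~\ref{varphishape}. None of that is needed: as you yourself notice en route, for $i<k<j$ with $t_i>t_j$ the term $\varphi(f_{i,k})\varphi(m_{k,j})$ requires $t_i<t_k$ (else $f_{i,k}=0$) \emph{and} $t_k<t_j$ (else $m_{k,j}=0$ by induction), so $t_i<t_k<t_j$, impossible; and symmetrically $m_{i,k}\tau(\varphi(f_{k,j}))$ requires $t_i<t_k$ \emph{and} $t_k<t_j$, again impossible. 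So every intermediate term is literally zero, the relation collapses (after absorbing the $k_E^\times$-units, which the valuation in Lemma~\ref{zerosolution}'s proof tolerates in any case) to $m_{i,j}\tau(\varphi(u^{t_j}))=\varphi(u^{t_i})\varphi(m_{i,j})$, and Lemma~\ref{zerosolution} finishes it directly. The divisibility input from Proposition~\ref{varphishape} is, moreover, not part of the lemma's hypotheses (the lemma only assumes $f_{i,j}=0$ when $t_i>t_j$), so it is good that your argument does not actually rely on it.
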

\begin{proof}
We prove by induction on the dimension $d$. When $d=1$, there is nothing to prove. Suppose the lemma is true for dimension less than $d$, and consider it for $d$.

We can apply the induction hypothesis to $F_{1, 1}$ and $M_{1, 1}$ (resp. $F_{d, d}$ and $M_{d, d}$), where $F_{1, 1}$ is the co-matrix of $F$ by deleting the 1st row and 1st column (and similarly for $M_{1, 1}, F_{d, d}$ and $M_{d, d}$).
So we only need to deal with the element on the most upper right corner. That is, we only need to prove that if $t_1 >t_d$, then $m_{1, d}=0$.

For any $2 \leq i \leq d$, we have
\begin{itemize}
  \item (Case 1) If $t_i>t_1>t_d$, then $f_{i, d}=0$ (property of $F$), and $m_{i, d}=0$ (induction hypothesis).
  \item (Case 2) If $t_1 >t_i$, then $f_{1, i}=0$ (property of $F$), and $m_{1, i}=0$ (induction hypothesis).
\end{itemize}

By the condition that $M \tau(\varphi(F)) = \varphi(F)\varphi(M)$, we must have
$$  \sum_{i=1}^d  m_{1, i} \tau(\varphi (f_{i, d}))  = \sum_{i=1}^d \varphi(f_{1, i}) \varphi(m_{i, d}).$$
So we will always have $m_{1, d} \tau(\varphi(u^{t_d}))  = \varphi(u^{t_1}) \varphi(m_{1, d})$, because all the other terms vanish.
Now we can conclude $m_{1, d}=0$ by Lemma \ref{zerosolution}.
\end{proof}

%#############################################################
\section{Crystalline lifting theorem} \label{section: lifting}

\begin{thm}
With notations in (\textnormal{\textbf{CRYS}}), and suppose that $\barrho$ is upper triangular. Suppose $\barrho \in \mathcal E(\barchi_1, \ldots, \barchi_d)$ such that $\barchi_i \barchi_j^{-1} \neq \overline{\varepsilon}_p, \forall i \neq j$.
Then there exists an upper triangular crystalline representation $\rho'$ such that $\barrho' \cong \barrho$, and $\HT(\rho')=\HT(\rho)$ as sets.
\end{thm}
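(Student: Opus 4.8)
The plan is to combine the shape results of Section 4 with the group theory of Section 3 and the ordinary crystalline lifting result of \cite{GG12}. Starting from $\barrho \in \mathcal E(\barchi_1, \ldots, \barchi_d)$, apply Proposition \ref{varphishape} to get that $\barm \in \mathcal E(\barn_d, \ldots, \barn_1)$ with $\barn_i = \barm(t_i; a_i)$, $\{t_i\} = \{r_i\}$, and a basis $\bolde$ of $\barm$ for which $\Aphi \in B_C(k_E[u])$, where $C = \{\ee_i - \ee_j : i<j, t_i < t_j\}$ as in \eqref{BC}. By Proposition \ref{tauBC}, the companion matrix $\Atau$ for the $\tau$-action on $\barhatm$ also lies in $B_C(R\otimes_{\Fp}k_E)$. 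Then invoke Proposition \ref{varphiBC} to get the unique $\sigma \in W_C$ with $\sigma^{-1}\Aphi\sigma$ upper triangular and $\diag(\sigma^{-1}\Aphi\sigma) = [a_{\sigma(1)}u^{r_1}, \ldots, a_{\sigma(d)}u^{r_d}]$.

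Next I would change the basis of $\barm$ (and correspondingly of $\barhatm$) by the permutation matrix $w_\sigma$: replace $\bolde$ by $\bolde w_\sigma$. Because $\sigma \in W_C$, Lemma \ref{LemmaBH} guarantees $w_\sigma^{-1} B_C w_\sigma \subseteq B$, so \emph{both} the new Frobenius matrix $\sigma^{-1}\Aphi\sigma$ and the new $\tau$-matrix $\sigma^{-1}\Atau\sigma$ are upper triangular; moreover the new Frobenius matrix is of the special form whose diagonal is $[a_{\sigma(1)}u^{r_1}, \ldots, a_{\sigma(d)}u^{r_d}]$, i.e. in increasing order of exponents. This is precisely the ``ordinary'' shape: the associated $(\varphi,\Ghat)$-module is now a successive extension of the rank-$1$ objects $\overline{\hatm}(r_i; a_{\sigma(i)})$ in the order $r_1 < \cdots < r_d$, so after this conjugation $\barrho$ is exhibited as an upper triangular (in fact ordinary) torsion representation whose graded pieces are (reductions of) crystalline characters with the correct, distinct Hodge-Tate weights $r_i$. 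I would then check that the hypothesis $\barchi_i\barchi_j^{-1} \neq \overline{\varepsilon}_p$ translates into the genericity/regularity condition required to apply the ordinary crystalline lifting result of \cite{GG12}: lift each $a_{\sigma(i)} \in k_E^\times$ to $\hat a_i \in \mathcal O_E^\times$, take the crystalline characters $\lambda_{\hat a_i}\psi^{r_i}$ (notation as in the rank-$1$ lemma), and assemble them into an ordinary crystalline $\rho'$ whose reduction realizes the same successive extension. Because Hodge-Tate weights are visible on the diagonal and the $r_i$ are preserved throughout, $\HT(\rho') = \{r_1,\ldots,r_d\} = \HT(\rho)$, and $\rho'$ is upper triangular by construction.

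The main obstacle I expect is the final lifting/extension step: having the right shape of Frobenius and $\tau$ matrices only shows $\barrho$ is a successive extension of the correct characters, but one must produce an actual crystalline representation realizing \emph{that same extension class}, not merely some crystalline representation with those graded pieces. This is where \cite{GG12} (ordinary crystalline deformations / liftings) must be applied carefully, and the condition $\barchi_i\barchi_j^{-1}\neq\overline{\varepsilon}_p$ is what ensures the relevant $\Ext$-groups of mod-$p$ characters inject into (or are controlled by) their crystalline counterparts, so that the extension classes can be lifted compatibly. A secondary technical point is verifying that conjugation by $w_\sigma$ is legitimate at the level of $(\varphi,\Ghat)$-modules over $\huaS_{k_E}$ — i.e. that it yields a genuine isomorphic module and that the conditions of Lemma \ref{pnot2} (in particular $A_\tau - Id \in \Mat(I_+\Rhat_{k_E})$ and the cocycle relations) are preserved — but since $w_\sigma$ is a constant invertible matrix this is routine given Lemma \ref{LemmaBH} and Propositions \ref{varphiBC}, \ref{tauBC}. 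I would organize the write-up as: (i) normalize $\barm$ via Prop.\ \ref{varphishape}; (ii) produce $\sigma$ via Prop.\ \ref{varphiBC} and conjugate, using Lemma \ref{LemmaBH} and Prop.\ \ref{tauBC} to stay upper triangular; (iii) identify the result as ordinary; (iv) translate the character condition and lift via \cite{GG12}; (v) read off the Hodge-Tate weights.
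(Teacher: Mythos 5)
Your proposal is correct and follows essentially the same route as the paper: normalize via Proposition \ref{varphishape}, conjugate by $w_\sigma$ (Propositions \ref{varphiBC}, \ref{tauBC} and Lemma \ref{LemmaBH}) to exhibit $\barhatm$, hence $\barrho$, as an upper triangular extension in the ordinary ordering $r_1<\cdots<r_d$, then lift by \cite[Lem.~3.1.5]{GG12}. The ``main obstacle'' you flag about realizing the same extension class (not merely the same graded pieces) is exactly what that lemma of \cite{GG12} provides under the hypothesis $\barchi_i\barchi_j^{-1}\neq\overline{\varepsilon}_p$, and the paper dispatches this step with that single citation.
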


\begin{proof}
Recall that $\bolde=(e_1, \ldots, e_d)$ is the basis of $\barm$ in Proposition \ref{varphishape}.
Let $\sigma \in W_C$ be the unique element as in Proposition \ref{varphiBC},
and denote $\bolde^{\sigma}:=(e_{\sigma(1)}, \ldots, e_{\sigma(d)})$.
By \textit{loc. cit.}, the matrix of $\varphi$ for $\barm$ with respect to the basis $\bolde^{\sigma}$ (which is $\sigma^{-1} \Aphi \sigma$) is still upper triangular. By Proposition \ref{tauBC} and Lemma \ref{LemmaBH}, the matrix of $\tau$ for $\barhatm$ with respect to the basis $1\otimes_{\varphi}\bolde^{\sigma}$ (which is $\sigma^{-1} \Atau \sigma$) is also upper triangular.
That is to say (by Lemma \ref{pnot2}), $\barhatm \in \mathcal E(\barhatn_{\sigma(d)}, \ldots, \barhatn_{\sigma(1)})$, where $\barhatn_{\sigma(i)}: = \barhatm( r_i; a_{\sigma(i)})$.
And so $\barrho=\hat T(\barhatm) \in  \mathcal E(\barchi_{\sigma(1)}, \ldots, \barchi_{\sigma(d)})$.

By Lemma \ref{lemma: rank 1}(3), each $\barchi_{\sigma(i)}$ has a crystalline lift $\chi_{\sigma(i)}=:\hat T(\hatm(r_i; \hat{a}_{\sigma(i)}))$, where $ \hat{a}_{\sigma(i)} \in \O_E^{\times}$ is any lift of $a_{\sigma(i)}$.
Since $r_1<\ldots <r_d$, by \cite[Lem. 3.1.5]{GG12} (note that our convention of Hodge-Tate weights is the opposite of \textit{loc. cit.}), $\barrho$ has an upper triangular crystalline lift $\rho'$ such that $\rho'\in \mathcal E(\chi_{\sigma(1)}, \ldots, \chi_{\sigma(d)})$. Let us remark here that $\rho'$ is in fact \emph{ordinary} in the sense of \cite[Def. 3.1.3]{GG12}.
\end{proof}

\section*{Acknowledgements}
%\subsection{Acknowledgement.}
The author would like to heartily thank Florian Herzig for many useful and inspiring discussions, and for patiently answering many questions. The author also would like to thank the anonymous referee(s) for useful comments which help to clarify the exposition.
This paper is written when the author is a postdoc in Beijing International Center for Mathematical Research and University of Helsinki, and we would like to thank the institutes for the hospitality.
This work is partially supported by China Postdoctoral Science Foundation General Financial Grant 2014M550539.

%############
% \bibliography{20160925Bib}

\begin{thebibliography}{GLS15}

\bibitem[BH15]{BH15}
Christophe Breuil and Florian Herzig.
\newblock Ordinary representations of {$G(\Bbb{Q}_p)$} and fundamental
  algebraic representations.
\newblock {\em Duke Math. J.}, 164(7):1271--1352, 2015.

\bibitem[CL11]{CL11}
Xavier Caruso and Tong Liu.
\newblock Some bounds for ramification of {$p^n$}-torsion semi-stable
  representations.
\newblock {\em J. Algebra}, 325:70--96, 2011.

\bibitem[Gao17]{Gao15unram}
Hui Gao.
\newblock Crystalline liftings and the weight part of {S}erre's conjecture.
\newblock {\em Israel J. Math.}, 221(1):117--164, 2017.

\bibitem[GG12]{GG12}
Toby Gee and David Geraghty.
\newblock Companion forms for unitary and symplectic groups.
\newblock {\em Duke Math. J.}, 161(2):247--303, 2012.

\bibitem[GLS14]{GLS14}
Toby Gee, Tong Liu, and David Savitt.
\newblock The {B}uzzard-{D}iamond-{J}arvis conjecture for unitary groups.
\newblock {\em J. Amer. Math. Soc.}, 27(2):389--435, 2014.

\bibitem[GLS15]{GLS15}
Toby Gee, Tong Liu, and David Savitt.
\newblock The weight part of {S}erre's conjecture for {$\rm{GL}(2)$}.
\newblock {\em Forum Math. Pi}, 3:e2 (52 pages), 2015.

\bibitem[GS16]{GS16}
Hui Gao and Claus Sorensen.
\newblock Locally algebraic vectors in the {B}reuil--{H}erzig ordinary part.
\newblock {\em Manuscripta Math.}, 151(1-2):113--131, 2016.

\bibitem[Kis06]{Kis06}
Mark Kisin.
\newblock Crystalline representations and {$F$}-crystals.
\newblock In {\em Algebraic geometry and number theory}, volume 253 of {\em
  Progr. Math.}, pages 459--496. Birkh\"{a}user Boston, Boston, MA, 2006.

\bibitem[Lev15]{Lev14}
Brandon Levin.
\newblock {$G$}-valued crystalline representations with minuscule {$p$}-adic
  {H}odge type.
\newblock {\em Algebra Number Theory}, 9(8):1741--1792, 2015.

\bibitem[Liu10]{Liu10}
Tong Liu.
\newblock A note on lattices in semi-stable representations.
\newblock {\em Math. Ann.}, 346(1):117--138, 2010.

\end{thebibliography}
 %\bibliographystyle{alpha}

\end{document}